

\documentclass[preprint,12pt]{elsarticle}




\usepackage{amssymb}



\journal{System \& Control Letters}


\usepackage{subdepth} 

\usepackage{graphics} 
\usepackage{epsfig} 
\usepackage{mathrsfs} 

\usepackage{amsmath,euscript,psfrag,color,graphicx}
\usepackage{amssymb,amsthm, amsfonts}
\usepackage{graphicx}
\usepackage{latexsym}

\usepackage[small,hang,bf,sf,up,labelsep=period]{caption} 
\usepackage{quoting} 
\usepackage{lettrine} 
\usepackage{tocloft} 
\usepackage{titletoc}
\usepackage{fmtcount}
\usepackage{calc} 
\usepackage{cite} 
\usepackage{multicol} 
\usepackage{centernot}
\usepackage{nicefrac}
\usepackage{dsfont}

\usepackage{lipsum}
\usepackage{psfrag}
\usepackage[export]{adjustbox} 

\PassOptionsToPackage{cmyk}{xcolor}
\usepackage[cmyk]{xcolor}
\selectcolormodel{cmyk}

\usepackage{bm}
\usepackage{textcomp}






\newcommand{\ve}[2][]{\ensuremath{\boldsymbol{\mathrm{#2}}}_{#1}}
\newcommand{\vet}[2][]{\ensuremath{\boldsymbol{\mathrm{#2}}^{\top}_{#1}}}
\newcommand{\vd}[2][]{\ensuremath{\dot{\boldsymbol{\mathrm{#2}}}_{#1}}}

\newcommand{\ma}[2][]{\ensuremath{\boldsymbol{\mathrm{#2}}}_{#1}}
\newcommand{\mat}[2][]{\ensuremath{\boldsymbol{\mathrm{#2}}^{\top}_{#1}}}
\newcommand{\md}[2][]{\ensuremath{\dot{\boldsymbol{\mathrm{#2}}}}_{#1}}
\newcommand{\mdt}[2][]{\ensuremath{\dot{\boldsymbol{\mathrm{#2}}}^{\top}_{#1}}}


\newcommand{\R}{\ensuremath{\mathds{R}}}

\newcommand{\C}{\ensuremath{\mathds{C}}}

\newcommand{\N}{\ensuremath{\mathds{N}}}

\newcommand{\Ham}{\ensuremath{\mathds{H}}}






\newcommand{\SOT}{\ensuremath{\mathsf{SO}(3)}}


\newcommand{\SO}{\ensuremath{\mathsf{SO}(n)}}


\newcommand{\sot}{\ensuremath{\mathsf{so}(3)}}


\newcommand{\so}{\ensuremath{\mathsf{so}(n)}}


\newcommand{\SET}{\ensuremath{\mathsf{SE}(3)}}



\newcommand{\GL}{\ensuremath{\mathsf{GL}(n)}}





\newcommand{\NO}{\ensuremath{\mathcal{N}}}






\newcommand{\Sn}{\ensuremath{\mathcal{S}^{n}}}


\newcommand{\St}{\ensuremath{\mathcal{S}^{2}}}




\newcommand{\GC}{\ensuremath{\mathcal{S}^1}}










\newcommand{\ie}{\textit{i.e.}, }

\newcommand{\eg}{\textit{e.g.}, }




\newcommand{\mtr}{\hspace{-0.3mm}\ensuremath{^\top}}




\newcommand{\diff}{\ensuremath{\mathrm{d}}}




\newcommand{\inv}{\ensuremath{^{-1}}}





\newcommand\raiseT[2]{\setbox0\hbox{$#1{#2}$}\raise\dp0\box0}



\newcommand{\etc}{\emph{etc.\ }}

\DeclareMathOperator{\e}{\ensuremath{\mathrm{e}}}


\DeclareMathOperator{\Log}{\ensuremath{\mathrm{Log}}}








\DeclareMathOperator{\sech}{\ensuremath{\mathrm{sech}}}


\DeclareMathOperator{\Atanh}{\ensuremath{\mathrm{Atanh}}}





\DeclareMathOperator*{\argmax}{\text{argmax}}

\DeclareMathOperator*{\argmin}{\text{argmin}}

\DeclareMathOperator{\rank}{\ensuremath{\mathrm{rank}}}








\DeclareMathOperator{\trace}{\ensuremath{\mathrm{tr}}}










\makeatletter
\newcommand{\raisemath}[1]{\mathpalette{\raisem@th{#1}}}
\newcommand{\raisem@th}[3]{\raisebox{#1}{$#2#3$}}
\makeatother





\newcommand{\ts}[2][]{\ensuremath{\mathsf{T}_{#2}#1}}









\newcommand{\maspace}{\hspace{-2mm}}



\definecolor{kthbluergb}{RGB}{25,84,166}
\definecolor{kthbluecmyk}{cmyk}{1,0.55,0,0}

\definecolor{kthblueA}{RGB}{25,84,166}
\definecolor{kthblueB}{RGB}{46,124,192}
\definecolor{kthblueC}{RGB}{112,153,209}
\definecolor{kthblueD}{RGB}{164,186,225}
\definecolor{kthblueE}{RGB}{211,220,241}

\newcommand{\um}{\hspace{0.2mm}}






\newcounter{counter} 

\newtheoremstyle{myplain} 
{\topsep}                    
{\topsep}                    
{\itshape}                   
{}                           
{\bfseries\sffamily}         
{.}                          
{.5em}                       
{}  

\theoremstyle{myplain}
\newtheorem{theorem}[counter]{Theorem}
\newtheorem{lemma}[counter]{Lemma}
\newtheorem{proposition}[counter]{Proposition}

\newtheoremstyle{mydefinition} 
{\topsep}                    
{\topsep}                    
{}                   
{}                           
{\bfseries\sffamily}         
{.}                          
{.5em}                       
{}  

\theoremstyle{mydefinition}

\newtheorem{example}[counter]{Example}

\newtheorem{algorithm}[counter]{Algorithm}

\newtheoremstyle{myremark} 
{\topsep}                    
{\topsep}                    
{}                   
{}                           
{\itshape}         
{.}                          
{.5em}                       
{\thmname{#1}\thmnumber{ #2}\thmnote{.#3}}  

\theoremstyle{myremark}
\newtheorem{remark}[counter]{Remark}


\makeatletter

\makeatother
\newcounter{parentnumber}

\begin{document}

\begin{frontmatter}


\title{A Geodesic Feedback Law to Decouple \\the Full and Reduced Attitude\tnoteref{label1}}
\tnotetext[label1]{This research was supported in part by the Swedish Foundation for Strategic Research and by the Royal Swedish Academy of Sciences.}
\author{Johan Markdahl$^{*,1}$, Jens Hoppe$^2$, Lin Wang$^3$, Xiaoming Hu$^2$}
\cortext[cor1]{Corresponding author. \textit{Email address:} \texttt{markdahl@kth.se}.}
\address{${}^1$Luxembourg Centre for Systems Biomedicine, University of Luxembourg, Belval, Luxembourg\\
${}^2$Department of Mathematics, KTH Royal Institute of Technology, Stockholm, Sweden\\
${}^3$Department of Automation, Shanghai Jiao Tong University, Shanghai, China}


\begin{abstract}
This paper presents a novel approach to the problem of almost global attitude stabilization. The reduced attitude is steered along a geodesic path on the $n-1$-sphere. Meanwhile, the full attitude is stabilized on $\SO$. This action, essentially two maneuvers in sequel, is fused into one smooth motion. Our algorithm is useful in applications where stabilization of the reduced attitude takes precedence over stabilization of the full attitude. A two parameter feedback gain affords further trade-offs between the full and reduced attitude convergence speed. The closed loop kinematics on $\SOT$ are solved for the states as functions of time and the initial conditions, providing  precise knowledge of the transient dynamics. The exact solutions also help us to characterize the asymptotic behavior of the system such as establishing the region of attraction by straightforward evaluation of limits. The geometric flavor of these ideas is illustrated by a numerical example.
\end{abstract}

\begin{keyword}
Attitude control, reduced attitude, geodesics, exact solutions, special orthogonal group.	




\end{keyword}

\end{frontmatter}


\section{Introduction}

\noindent The attitude tracking problem for a rigid-body is well-known in the literature. It is interesting from a theoretical point of view due to the nonlinear state equations and the topology of the underlying state space $\SOT$. Application oriented approaches to attitude control often make use of parameterizations such as Euler angles or unit quaternions to represent $\SOT$. The choice of parameterization is not without importance since it may affect the limits of control performance \citep{mayhew2011quaternion,chaturvedi2011rigid,bhat2000topological}. An often cited result states that global stability cannot be achieved on $\SOT$ by means of a continuous, time-invariant feedback \citep{bhat2000topological}. It is however possible to achieve almost global asymptotic stability through continuous time-invariant feedback \citep{chaturvedi2011rigid,sanyal2009inertia}, almost semi-global stability \citep{LeeSC}, or global stability by means of a hybrid control approach \citep{mayhew2011quaternion-based}. These subjects have also been studied with regards to the reduced attitude, \ie on the $2$-sphere \citep{bullo1995control,chaturvedi2011rigid}. The problem of pose control on $\SET$ is strongly related to the aforementioned problems. Many of the previously referenced results can be combined with position control algorithms in an inner-and-outer-loop configuration to achieve pose stabilization \citep{roza2012position}.

Like \citep{LeeSC,mayhew2011quaternion-based,chaturvedi2011rigid,sanyal2009inertia}, this paper provides a novel approach to the attitude stabilization problem. The generalized full attitude is stabilized on $\SO$. Meanwhile, the generalized reduced attitude is steered along a geodesic path on the $(n-1)$-sphere. The motion of the reduced attitude is decoupled from the remaining degree of freedom of the full attitude but not vice versa. An action consisting of two sequential manoeuvres is thus fused into one smooth motion. This algorithm is of use in applications where the stabilization of the reduced attitude takes precedence over that of the full attitude. A two parameter feedback gain affords further trade-offs regarding the full and reduced attitude convergence speed. The kinematic model is suited for applications in the field of visual servo control \citep{chaumette2006visual,chaumette2007visual}. Consider a camera that is tracking an object. The goal is to keep the camera pointing towards the object whereas the roll angle is of secondary importance. The proposed algorithm solves this problem by steering the principal axis directly towards the object while simultaneously stabilizing the roll angles without resorting to a non-smooth control consisting of two separate motions.

While literature on the kinematics and dynamics of $n$-dimensional
rigid-bodies (\eg \citep{hurtado2004hamel}) may primarily be theoretically motivated, the developments also provide a unified framework for the cases of $n\in\{2,3\}$. The generalized reduced attitude encompasses all orientations in physical space: the heading on a circle, the reduced attitude on the sphere, and the unit quaternions on the $3$-sphere. Relevant literature includes works concerning stabilization \citep{maithripala2006almost}, synchronization \citep{lageman2009synchronization}, and estimation \citep{lieobserver} on $\SO$. It also includes the previous work \citep{markdahl2014analytical,markdahl2013analytical} of the authors. Note that work on $\SO$ for $n\geq4$ is not only of theoretical concern; it also finds applications in the visualization of high-dimensional data \citep{thakur2008}. 

Exact solutions to a closed-loop system yields insights into both its transient and asymptotic behaviors and may therefore be of value in applications. The literature on exact solutions to attitude dynamics may, roughly speaking, be divided into two separate categories. First, there are a number of works where the exact solutions are obtained during the control design process, \eg using exact linearization \citep{dwyer1984exact}, optimal control design techniques such as the Pontryagin maximum principle \citep{spindler1998optimal}, or in the process of building an attitude observer \citep{attitudeobserver}. Second, there are studies of the equations defining rigid-body dynamics under a set of specific assumptions whereby the exact solutions become one of the main results \citep{elipe2008exact,doroshin2012exact,ayoubi2009asymptotic}. This paper belongs to the second category. The closed-loop kinematics on $\SOT$ are solved for the states as functions of time and the initial conditions, providing  precise knowledge of the workings of the transient dynamics.

Recent work on the problem of finding exact solutions to closed-loop systems on $\SO$ includes \citep{markdahl2014analytical,markdahl2013analytical}. Related but somewhat different problems are addressed in  \citep{elipe2008exact,doroshin2012exact,ayoubi2009asymptotic}. Earlier work \citep{markdahl2012exact} by the authors is strongly related but also underdeveloped; its scope is limited to the case of $\mathrm{SO}(3)$. This paper concerns a generalization of the equations studied in \citep{markdahl2012exact,markdahl2013analytical}. The results of \citep{markdahl2013analytical} is also generalized in \citep{markdahl2014analytical}, partly towards application in model-predictive control and sampled control systems and without focus on the behavior of the reduced attitude. The work \citep{markdahl2015automatica} addresses the problem of continuous actuation under discrete-time sampling. The exact solutions provide an alternative to the zero-order hold technique. The algorithm alternates in a fashion that is continuous in time between the closed-loop and open-loop versions of a single control law. The feedback law proposed in this paper can also be used in such applications by virtue of the exact solutions.

\section{Preliminaries}
\label{secB:prel}

\noindent Let $\ma{A}, \ma{B}\in\C^{n\times n}$. The spectrum of $\ma{A}$ is written as $\sigma(\ma{A})$. Denote the transpose of $\ma{A}$ by $\ma{A}^{\!\top}$ and the complex conjugate by $\ma{A}^*$.  The inner product is defined by $\langle\ma{A},\ma{B}\rangle=\trace(\ma{A}^{\!\top}\ma{B})$ and the Frobenius norm by $\|\ma{A}\|_F=\langle \ma{A},\ma{A}\rangle^{\frac12}.$ The outer product of two vectors $\ve{x},\ve{y}\in\R^n$ is defined as $\ve{x}\otimes\ve{y}=\ve{x}\vet{y}$. The commutator of two matrices is $[\ma{A},\ma{B}]=\ma{AB}-\ma{BA}$ and the anti-commutator is $\{\ma{A},\ma{B}\}=\ma{AB}+\ma{BA}$.

 The special orthogonal group is $\SO=\{\ma{R}\in \R^{n\times n}\,|\,\ma{R}\inv=\mat{R}\hspace{-1.3mm},\hspace{1.3mm}\,\det\ma{R}=1\}$. The special orthogonal Lie algebra is $\so=\{\ma{S}\in\R^{n\times n}\,|\,\mat{S}=-\ma{S}\}$. The $n$-sphere is $\Sn=\{\ve{x}\in\R^{n+1}\,|\,\|\ve{x}\|=1\}$. The geodesic distance between $\ve{x},\ve{y}\in\Sn$ is given by $\vartheta(\ve{x},\ve{y})=\arccos\langle\ve{x},\ve{y}\rangle$. An almost globally asymptotically stable equilibrium is stable and attractive from all initial conditions in the state space except for a set of zero measure. The terms attitude stabilization, reduced attitude stabilization, and geodesic path refer to the stabilization problem on $\SO$, the $n$-sphere, and curves that are geodesic up to parametrization respectively.



Real matrix valued, real matrix variable hyperbolic functions are defined by means of the matrix exponential, \eg $\cosh:\R^{n\times n}\rightarrow\GL$ is given by $\cosh(\ma{A})=\smash{\frac12}[\exp(\ma{A})+\exp(-\ma{A})]$ for all $\ma{A}\in\R^{n\times n}$. Let $\Log:\C \backslash\{0\}\rightarrow\C$ denote the principal logarithm, \ie $\Log z=\log r+i\vartheta$, where $z=r\e^{i\vartheta}$ and $\vartheta\in(-\pi,\pi]$. Let $\Atanh:\C\backslash\{-1,1\}\rightarrow\C$ denote the principal inverse hyperbolic tangent, \ie $\Atanh z=\tfrac12[\Log(1+z)-\Log(1-z)]$. Note that $\tanh:\C\backslash\{-1,1\}\rightarrow\C$ satisfies $\tanh\Atanh z=z$ for all $z\in\C \backslash\{-1,1\}$. Extend these definitions to the extended real number line $\R\cup\{-\infty,\infty\}$ and the Riemann sphere $\C\cup\{\infty\}$ by letting $\log 0=-\infty$, $\Atanh 1=\infty$, $\tanh\infty=1$ \etc \citep{rudin1987real}.


\section{Problem Description}

\subsection{Stabilization and Tracking}

\noindent The orientation or attitude of a rigid body is represented by a rotation matrix that transforms the body fixed frame into a given inertial fixed frame. Let $\ma{X}\in\mathrm{SO}(3)$ denote this rotation matrix. The kinematics of a rigid body dictates that $\md{X}=\ma{\Omega}\ma{X}$, where $\ma{\Omega}\in\mathrm{so}(3)$ is a skew-symmetric matrix representing the angular velocity vector of the rigid body. The attitude stabilization problem is the problem of designing a feedback law that stabilizes a desired frame $\ma[d]{X}$ which without loss of generality can be taken to be the identity matrix.

The attitude tracking problem concerns the design of an $\ma{\Omega}$ that rotates $\ma{X}$ into a desired moving frame $\ma[d]{X}\in\SOT$. Assume that $\ma[d]{X}$ is generated by $\md[d]{X}=\ma[d]{\Omega}\ma[d]{X}$, where $\ma[d]{\Omega}\in\sot$ is known. Furthermore assume that the relative rotation error $\ma{R}=\mat[d]{X}\ma{X}\in\SOT$ is known to the feedback algorithm. Note that rotating $\ma{X}$ into $\ma[d]{X}$ is equivalent to rotating $\ma{R}$ into $\ma{I}$. Moreover,
\begin{align}
\md{R}&=\mdt[d]{X}\ma{X}+\mat[d]{X}\md{X}=(\ma[d]{\Omega}\ma[d]{X})\mtr\ma{X}+\mat[d]{X}\ma{\Omega}\ma{X}=-\mat[d]{X}\ma[d]{\Omega}\ma[d]{X}\ma{R}+
\mat[d]{X}\ma{\Omega}\ma[d]{X}\ma{R}\nonumber\\
&=\mat[d]{X}(-\ma[d]{\Omega}+\ma{\Omega})\ma[d]{X}\ma{R}=\ma{U}\ma{R},\label{eqB:Rd}
\end{align}
where  $\ma{U}=\mat[d]{X}(-\ma[d]{\Omega}+\ma{\Omega})\ma[d]{X}\in\sot$. The kinematic level attitude tracking problem in the case of known $\ma[d]{R},\ma[d]{\Omega}$ can hence be reduced to the attitude stabilization problem. It is also clear that attitude stabilization is a special case of attitude tracking.

From a mathematical perspective it is appealing to strive for generalization. Consider the evolution of a positively oriented $n$-dimensional orthogonal frame represented by $\ma{R}\in\SO$. The dynamics are given by
\begin{align}\label{eqB:SOn}
\md{R}&=\ma{U}\ma{R},
\end{align}
where $\ma{U}\in\so$. The kinematic level generalized attitude stabilization problem concerns the design of an $\ma{U}$ that stabilizes the identity matrix on $\SO$. It is assumed that $\ma{R}$ can be actuated along any direction of its tangent plane at the identity $\ts[\SO]{\ma{I}}=\so$. Note that $\SO$ is invariant under the kinematics \eqref{eqB:SOn}, \ie any solution $\ma{R}(t)$ to \eqref{eqB:SOn} that satisfies $\ma{R}(0)=\ma[0]{R}\in\SO$ remains in $\SO$ for all $t\in[0,\infty)$.

\subsection{The Reduced Attitude}

\noindent It is sometimes preferable to only consider $n-1$ of the $\tfrac12n(n-1)$ degrees of freedom on $\SO$. In the case of $\SOT$, these correspond to the reduced attitude \citep{chaturvedi2011rigid}. The reduced attitude consists of the points on the unit sphere $\mathcal{S}^2\simeq\mathsf{SO}(3)/\mathsf{SO}(2)$. It formalizes the notion of pointing orientations such as the two degrees of rotational freedom possessed by objects with cylindrical symmetry. The reduced attitude is also employed in redundant tasks like robotic spray painting and welding that only require the utilization of two of the usual three degrees of rotational freedom in physical space \citep{siciliano2008springer}.

Reduced attitude control by means of kinematic actuation is a special case of control on the unit $n$-sphere, $\Sn=\{\ve{x}\in\R^{n+1}\,|\,\|\ve{x}\|=1\}$. The generalized reduced attitude can be used to model all physical rotations. The heading of a two-dimensional rigid-body is an element of $\GC$, the pointing direction of a cylindrical rigid-body is an element of $\St$, and the full attitude can be parametrized by $\mathcal{S}^3$ through a composition of two maps via the unit quaternions $\mathcal{S}^0(\Ham)=\{q\in\Ham\,|\,|q|=1\}$.



Let $\ve[1]{e}\in\mathcal{S}^{n-1}$ be a vector expressed in the body-fixed frame of an $n$-dimensional rigid body. The reduced attitude $\ve{x}\in\mathcal{S}^{n-1}$ is defined as the inertial frame coordinates of $\ve[1]{e}$, \ie $\ve{x}=\ma{X}\ve[1]{e}$. The reduced attitude stabilization problem is solved by a feedback algorithm that can turn $\ve{x}$ into any desired value $\ve[d]{x}\in\mathcal{S}^{n-1}$. Note that $\vd{x}=\vd{X}\ve[1]{e}=\ma{\Omega}\ma{X}\ve[1]{e}=\ma{\Omega}\ve{x}$. Assume that $\ve[d]{x}=\ma[d]{X}\ve[1]{e}$ satisfies $\vd[d]{x}=\ma[d]{\Omega}\ve[d]{x}$. Set $\ve{r}=\mat[d]{X}\ve{x}=\mat[d]{X}\ma{X}\ve[1]{e}=\ma{R}\ve[1]{e}$. Turning $\ve{x}$ into $\ve[d]{x}$ is equivalent to turning $\ve{r}$ into $\ve[1]{e}$. Moreover, $\vd{r}=\md{R}\ve[1]{e}=\ma{U}\ma{R}\ve[1]{e}=\ma{U}\ve{r}$, where $\ma{U}=\mat[d]{X}(-\ma[d]{\Omega}+\ma{\Omega})\ma[d]{X}\in\so$,  like in the $\SOT$ case. The evolution of $\ve{r}$ is controllable on $\mathcal{S}^{n-1}$ \citep{brockett1973lie}.

Note that $\vd{r}\perp \ve{r}$ due to $\ma{U}\in\so$. The set $\so$ has more than enough degrees of freedom to fully actuate $\ve{r}$. It suffices to express $\ma{U}$ in terms of a control $\ve{u}\in\R^{n}$ by letting $\ma{U}=\ve{u}\otimes\ve{r}-(\ve{u}\otimes\ve{r})\mtr\in\so$. Then,
\begin{align}\label{eqB:u}
\vd{r}&=\ve{u}\otimes\ve{r}\,\ve{r}-\ve{r}\otimes\ve{u}\,\ve{r}=\ve{u}-\langle\ve{u},\ve{r}\rangle\ve{r}=(\ma{I}-\ve{r}\otimes\ve{r})\ve{u},
\end{align}
where the identity $\ve{u}\otimes\ve{v}\,\ve{w}=\langle\ve{v},\ve{w}\rangle\ve{u}$ for any $\ve{u},\ve{v},\ve{w}\in\R^n$ is used. Since $\ve{u}$ is arbitrary, $\ve{r}$ can be actuated in any direction along its tangent plane $\smash{\ts[\mathcal{S}^{n-1}]{\ve{r}}}=\{\ve{v}\in\R^{n}\,|\,\langle\ve{r},\ve{v}\rangle=0\}$, \ie the hyperplane of vectors orthogonal to $\ve{r}$. The generalized kinematic level reduced attitude stabilization problem concerns the design of an $\ve{u}$ that stabilizes $\ve[1]{e}$.

Note that by setting $\ma{u}=\ve{v}$, the dynamics \eqref{eqB:u} moves $\ve{r}$ in the steepest descent direction of the geodesic distance $\vartheta(\ve{v},\ve{r})=\arccos\langle\ve{v},\ve{r}\rangle$ in the case of a constant $\ve{v}\in\Sn$,
\begin{align*}
\argmin_{\ve{u}\in\Sn}\dot{\vartheta}=\argmax\limits_{\ve{u}\in\Sn}\tfrac{\diff}{\diff t}\langle\ve{v},\ve{r}\rangle=\argmax_{\ve{u}\in\Sn}\langle\ve{v},(\ma{I}-\ve{r}\otimes\ve{r})\ve{u}\rangle=\ve{v}.
\end{align*}
We say that a feedback $\ve{u}$ is geodesic if it controls the system along a path of minimum length in the state-space, \ie if there is a reparametrization of time that turns the state trajectory into a geodesic curve. 

\subsection{Problem Statement}

\noindent This paper concerns the formulation and proof of stability of a control law that solves the problem of stabilizing the full attitude almost globally while simultaneously providing a geodesic feedback for the reduced attitude. In other words, we design a control signal $\ma{U}$ such that $\ma{I}$ is an almost globally asymptotically stable equilibrium of the full attitude $\ma{R}$ and the reduced attitude $\ma{r}$ moves towards $\ve[1]{e}$ along a great circle. Moreover, on $\SOT$, which is the most interesting case for application purposes, we also solve the closed-loop equations generated by the proposed algorithm for $\ma{R}$ as a function of time, the initial condition, and two gain parameters.

\section{Control Design}

\noindent This section presents an algorithm that stabilizes the identity matrix on $\SO$. The proposed algorithm is also shown to stabilize the generalized reduced attitude along a geodesic curve on the $(n-1)$-sphere from all initial conditions except a single point.

\begin{algorithm} \label{algo:2014}
Let the feedback $\ma{U}:\SO\rightarrow\so$ be given by
\begin{align}
\ma{U}&=\ma{P}\mat{R}-\ma{R}\ma{P}+k\,\ma{R}\ma{Q}(\mat{R}-\ma{R})\ma{Q}\mat{R},\label{eq:control2014}
\end{align}
where $\ma{P}\in\{\ma{A}\in\R^{n\times n}\,|\,\ma{A}^2=\ma{A},\, \mat{A}=\ma{A}\}$, \ie $\ma{P}$ is a constant orthogonal projection, $k\in(0,\infty)$, and $\ma{Q}=\ma{I}-\ma{P}$. 
\end{algorithm}

\begin{remark}
The control gain $k$ is introduced to afford a trade-off between the reduced and full attitude convergence rates. Note that a second feedback gain parameter can be introduced by multiplying $\ma{U}$ by some positive constant. This is equivalent to scaling time, wherefore a single parameter suffices.
\end{remark}

The resulting closed loop system is
\begin{align}
\md{R}&=\ma{U}\ma{R}=\ma{P}-\ma{R}\ma{P}\ma{R}+k\,\ma{R}\ma{Q}(\mat{R}-\ma{R})\ma{Q}.\label{eq:closed2014}
\end{align}
Note that $\ma{Q}$ is also an orthogonal projection matrix, \ie $\ma{Q}^2=\ma{Q}$ and $\mat{Q}=\ma{Q}$. Moreover, $\ma{P}$ and $\ma{Q}$ satisfy the relations $\ma{P}+\ma{Q}=\ma{I}$ and $\ma{P}\ma{Q}=\ma{0}$..

Consider the case of $\ma{P}=\ve[1]{e}\otimes\,\ve[1]{e}$, where $\{\ve[1]{e},\ldots,\ve[n]{e}\}$ denotes the standard basis of $\R^n$. This is equivalent, up to a change of coordinates, to the case of $\rank\ma{P}=1$. The dynamics of the reduced attitude are given by
\begin{align} \label{eq:rd}
\vd{r}=\ve[1]{e}-\langle\ve[1]{e},\ve{r}\rangle\ve{r},
\end{align}
\ie equation \eqref{eqB:u} with $\ve{u}=\ve[1]{e}$. This feedback results in $\ve{r}\in\mathcal{S}^{n-1}$ moving towards $\ve[1]{e}$ along a great circle. The case of $\ma{P}=\ve[1]{e}\otimes\ve[1]{e}$ is further explored in Section \ref{sec:reduced} and \ref{sec:exact}.

The first skew-symmetric difference in \eqref{eq:control2014} is designed to steer $\ma{R}\ma{P}$ to $\ma{P}$. If $\ma{P}=\ma{I}$, this control action suffices to stabilize the identity matrix. Otherwise, when $\|\ma{RP}-\ma{P}\|_2$ is sufficiently small, the second skew-symmetric term kicks in to steer $\ma{R}\ve{Q}$ to $\ve{Q}$. Intuitively speaking, in the case of $\ma{P}=\ve[1]{e}\times\ve[1]{e}$, this can be interpreted as stabilization on $\mathcal{S}^{n-1}\simeq\SO\backslash\mathsf{SO}(n-1)$ followed by stabilization on $\mathsf{SO}(n-1)$, where the two control actions have been fused into one smooth motion. Since $\ma{R}=\ma{R}(\ma{P}+\ma{Q})=\ma{R}\ma{P}+\ma{R}\ma{Q}$, if $\lim_{t\rightarrow\infty}\ma{R}(t)\ma{P}=\ma{P}$ and  $\lim_{t\rightarrow\infty}\ma{R}(t)\ma{Q}=\ma{Q}$, then $\lim_{t\rightarrow\infty}\ma{R}(t)=\ma{I}$. 


\begin{remark}
The algorithm of \citep{markdahl2012exact} as well as the two algorithms of \citep{markdahl2013analytical} are special cases of Algorithm \ref{algo:2014}. In \citep{markdahl2012exact}, $n=3$ and $\ma{P}=\ve[1]{e}\otimes\ve[1]{e}$. In \citep{markdahl2013analytical}, $n\in\N$, and  $\ma{P}=\ma{I}$ or $\ma{P}=\ma{I}-\ve[n]{e}\otimes\ve[n]{e}$ for the two respective algorithms. This paper explores the general case of  $n\in\N$ and $\ma{P}\in\{\ma{A}\in\R^{n\times n}\,|\,\ma{A}^2=\ma{A},\, \mat{A}=\ma{A}\}$, with focus on  projection matrices that satisfy $\rank\ma{P}\leq n-2$. The case of $\rank\ma{P}\in\{1,n-1,n\}$ is considerably simpler.
\end{remark}

%


\section{The Reduced Attitude}
\label{sec:reduced}

%
%
%
%
\noindent Let us show that \eqref{eq:control2014} is a geodesic feedback for the reduced attitude in the special case of $\smash{\ma{P}=\ve[1]{e}\otimes\ve[1]{e}}$, which is equivalent to the case of $\rank\ma{P}=1$ up to a change of coordinates. Our objective is to turn the unit vector $\ve{r}$ into $\ve[1]{e}$ by means of a continuous rotation about a constant axis. A stability proof  in the case of a general orthogonal projection matrix $\ma{P}$ is also given. 

\begin{proposition}\label{propB:reduced}
Set $\ma{P}=\ve[1]{e}\otimes\ve[1]{e}$ in Algorithm \ref{algo:2014}. The equilibrium $\ve{r}=\ve[1]{e}$ of \eqref{eq:rd} is almost globally exponentially stable on $\mathcal{S}^{n-1}$. The unstable manifold $\{-\ve[1]{e}\}$ corresponds to a single point that is antipodal to the desired equilibrium. Moreover, $\ve{r}$ evolves from its initial value to $\ve[1]{e}$ along a great circle.
\end{proposition}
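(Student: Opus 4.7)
My approach is to exploit the special structure of \eqref{eq:rd} by splitting $\ve{r}$ into its component along $\ve[1]{e}$ and the orthogonal component, and observing that the orthogonal component only decays in magnitude without changing direction. This directly yields both the great-circle property and the almost-global stability.

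Write $r_1=\langle\ve[1]{e},\ve{r}\rangle$ and $\ve{r}_\perp=\ve{r}-r_1\ve[1]{e}\in\ve[1]{e}^\perp$. Taking the inner product of \eqref{eq:rd} with $\ve[1]{e}$ and using $\|\ve{r}\|=1$ I get the scalar Riccati-type ODE
\begin{equation*}
\dot{r}_1 = 1 - r_1^2,
\end{equation*}
whose solution is $r_1(t)=\tanh(t+\Atanh r_1(0))$, valid for every initial condition $r_1(0)\in[-1,1]$ (using the extended-real convention from Section~\ref{secB:prel} when $r_1(0)=\pm 1$). Substituting $\dot{r}_1$ back into \eqref{eq:rd} gives $\vd{r}_\perp=\vd{r}-\dot{r}_1\ve[1]{e}=-r_1\ve{r}_\perp$, a linear ODE in $\ve{r}_\perp$ whose solution $\ve{r}_\perp(t)=\exp\bigl(-\int_0^t r_1(s)\,\mathrm{d}s\bigr)\,\ve{r}_\perp(0)$ stays parallel to $\ve{r}_\perp(0)$ at all times. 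Hence $\ve{r}(t)$ lies in the two-plane $\linspan\{\ve[1]{e},\ve{r}(0)\}$, and since $\|\ve{r}(t)\|=1$, the trajectory is confined to the great circle obtained by intersecting that plane with $\mathcal{S}^{n-1}$ (the case $\ve{r}(0)=\pm\ve[1]{e}$ being trivial).

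For the stability claims I appeal directly to the closed-form solution. Whenever $r_1(0)>-1$, we have $\Atanh r_1(0)\in\mathbb{R}$ and $r_1(t)\to 1$ as $t\to\infty$, giving $\ve{r}(t)\to\ve[1]{e}$. The estimate $1-\tanh\tau\le 2\mathrm{e}^{-2\tau}$ together with $\|\ve{r}-\ve[1]{e}\|^2=2(1-r_1)$ yields the exponential rate; linearizing $\dot r_1=1-r_1^2$ at $r_1=1$ also gives eigenvalue $-2$, confirming exponential stability. The only initial condition from which the trajectory does not reach $\ve[1]{e}$ is $r_1(0)=-1$, i.e.\ $\ve{r}(0)=-\ve[1]{e}$, which is a single point and hence a set of zero measure, so $\ve[1]{e}$ is almost globally exponentially stable and the unstable manifold is $\{-\ve[1]{e}\}$.

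The main obstacle is essentially bookkeeping: ensuring that the scalar reduction is valid globally on the sphere (which follows automatically because $\ve{r}_\perp$ is constrained to a fixed one-dimensional subspace), and handling the boundary case $r_1(0)=-1$ using the extended-real conventions recalled in Section~\ref{secB:prel}. No Lyapunov argument or LaSalle invocation is needed because the closed-form solution provides all asymptotic information directly.
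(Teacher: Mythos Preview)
Your proof is correct but follows a genuinely different route from the paper. The paper proceeds via the Lyapunov function $V=1-\langle\ve[1]{e},\ve{r}\rangle$, computes $\dot V=-(2-V)V$, and invokes LaSalle's invariance principle together with the bound $\dot V\le -V$ on a hemisphere to obtain almost global asymptotic and local exponential stability; the great-circle property is then argued indirectly by noting that the Euclidean and intrinsic arc-length metrics have aligned gradients and that $\nabla V=-\ve[1]{e}$, so the motion is steepest descent for $\vartheta$. You instead decompose $\ve{r}$ into $r_1\ve[1]{e}$ and $\ve{r}_\perp$, solve the scalar Riccati equation $\dot r_1=1-r_1^2$ in closed form, and observe that $\vd{r}_\perp=-r_1\ve{r}_\perp$ freezes the direction of $\ve{r}_\perp$, so the trajectory visibly lies in $\linspan\{\ve[1]{e},\ve{r}(0)\}\cap\mathcal{S}^{n-1}$ and the great-circle claim is immediate. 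Your argument is more elementary, dispenses with LaSalle altogether, and in fact anticipates the exact-solution formulae $r_{11}(t)=\tanh(t+\Atanh r_{11,0})$ and $\ve[21]{r}(t)=\sech t\,(1+\tanh(t)r_{11,0})^{-1}\ve[21,0]{r}$ that the paper only derives later in Section~\ref{sec:exact}. The paper's Lyapunov approach, by contrast, is methodologically aligned with the full-attitude analysis that follows and does not rely on being able to integrate anything explicitly.
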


\begin{proof} Define a candidate Lyapunov function $V:\mathcal{S}^{n-1}\rightarrow[0,2]$ by
\begin{align*}
V&=\tfrac12\|\ve{r}-\ve[1]{e}\|^2_2=1-\langle\ve[1]{e},\ve{r}\rangle.
\end{align*}
Then $\dot{V}=-\langle\ve[1]{e},\ve[1]{e}-\langle\ve[1]{e},\ve{r}\rangle\ve{r}\rangle=-(2-V)V$ by \eqref{eq:rd}. The equilibrium $\ve{r}=\ve[1]{e}$ is almost globally asymptotically stable by application of LaSalle's invariance principle and Lyapunov's theorem. Local exponential stability follows from $\dot{V}\leq-V$ on the hemisphere $\{\ve{r}\in\mathcal{S}^{n-1}\,|\,\langle\ve[1]{e},\ve{r}\rangle\geq0\}$.

The Euclidean metric $d(\ve{x},\ve{y})=\|\ve{x}-\ve{y}\|_2$ and intrinsic arc length metric $\vartheta(\ve{x},\ve{y})=\arccos\langle\ve{x},\ve{y}\rangle$ for any $\ve{x},\,\ve{y}\in\mathcal{S}^{n-1}$ are related by $d(\ve{x},\ve{y})^2=
\|\ve{x}-\ve{y}\|^2_2=2(1-\langle\ve{x},\ve{y}\rangle)=2[1-\cos\vartheta(\ve{x},\ve{y})]$ for all $\ve{x},\ve{y}\in\mathcal{S}^{n-1}$. It follows that the gradients of $d$ and $\vartheta$, defined using the metric tensor induced by the inner product in Euclidean space, are negatively aligned. Moreover, $\nabla V=-\ve[1]{e}=-\ve{u}$, so $\ve{r}$ only moves in the steepest decent direction of $\vartheta$, \ie $\ve{r}(t)$ is a geodesic curve up to parametrization which makes $\ve{u}$ a geodesic feedback.\end{proof}

\begin{remark}
The problem of geodesic feedback as well as other control problems on the sphere such as dynamic level control and tracking on the $2$-sphere are addressed in \citep{bullo1995control}. The work \citep{brockett1973lie} explores the problems of controllability, observability, and minimum energy optimal control on the $n$-sphere.
\end{remark}	

Consider the case of a general orthogonal projection matrix $\ma{P}$. Postmultiply $\md{R}$ by $\ma{P}$ to find that
\begin{align}
\md{R}\ma{P}&=\ma{P}-(\ma{R}\ma{P})^2, \label{eq:RP}
\end{align}
where the $k$-term in \eqref{eq:closed2014} is canceled due to $\ma{P}\ma{Q}=\ma{0}$. Note that $\ma{P}$ and $\ma{Q}$ satisfy the following relations $\cosh(\ma{P}t)=\ma{Q}+\cosh(t)\ma{P},\quad
\sinh(\ma{P}t)=\sinh(t)\ma{P}$, where $\cosh$ and $\sinh$ denote matrix variable, matrix valued hyperbolic functions defined by replacing the exponential function in their scalar variable, scalar valued analogues by the matrix exponential. The matrix $\cosh(\ma{P}t)$ is nonsingular since $\sigma[\cosh(\ma{P}t)]\subset\{1,\cosh t\}\subset(1,\infty)$.

\begin{proposition}\label{th:RP} The unique solution to $\md{H}=\ma{P}-\ma{H}^2$ as a trajectory in the homogeneous space 
\begin{align}
\mathcal{H}=\{\ma{H}\in\R^{n\times n}\,|\,\ma{H}=\ma{R}\ma{P},\ma{R}\in\SO\}\label{eq:hspace}
\end{align}
with initial condition $\ma{H}(0)=\ma[0]{H}\in\mathcal{H}$ is given by
\begin{align}
\ma{H}(t)=&{}[\sinh(\ma{P}t)+\cosh(\ma{P}t)\ma[0]{H}][\cosh(\ma{P}t)+\sinh(\ma{P}t)\ma[0]{H}]\inv.\label{eq:H}
\end{align}
\end{proposition}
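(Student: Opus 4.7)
The plan is to verify the formula by matrix-fraction differentiation and then invoke Picard--Lindelöf for uniqueness; the chief technical difficulty lies in establishing invertibility of the denominator and in showing that along the trajectory the intermediate ODE $\md{H} = \ma{P} - \ma{H}\ma{P}\ma{H}$ reduces to $\md{H} = \ma{P} - \ma{H}^2$ via the identity $\ma{H}\ma{P} = \ma{H}$.

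Concretely, I set $\ma{Y}(t) = \sinh(\ma{P}t) + \cosh(\ma{P}t)\ma[0]{H}$ and $\ma{X}(t) = \cosh(\ma{P}t) + \sinh(\ma{P}t)\ma[0]{H}$ so that $\ma{H} = \ma{Y}\ma{X}\inv$. Because $\ma{P}$ commutes with $\cosh(\ma{P}t)$ and $\sinh(\ma{P}t)$, termwise differentiation gives $\md{Y} = \ma{P}\ma{X}$ and $\md{X} = \ma{P}\ma{Y}$; combined with $\tfrac{\diff}{\diff t}\ma{X}\inv = -\ma{X}\inv\md{X}\ma{X}\inv$ this produces $\md{H} = \ma{P} - \ma{H}\ma{P}\ma{H}$. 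The hypothesis $\ma[0]{H}\in\mathcal{H}$ gives $\ma[0]{H}\ma{Q} = \ma[0]{R}\ma{P}\ma{Q} = \zeromatrix$, from which a one-line computation yields $\ma{Y}\ma{Q} = \zeromatrix$ and $\ma{X}\ma{Q} = \ma{Q}$. Granted invertibility of $\ma{X}$, the latter forces $\ma{X}\inv\ma{Q} = \ma{Q}$, hence $\ma{H}\ma{Q} = \ma{Y}(\ma{X}\inv\ma{Q}) = \ma{Y}\ma{Q} = \zeromatrix$, i.e., $\ma{H}\ma{P} = \ma{H}$, so $\ma{H}\ma{P}\ma{H} = \ma{H}^2$ and the ODE collapses to $\md{H} = \ma{P} - \ma{H}^2$ as desired.

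The main obstacle is showing that $\ma{X}(t)$ is invertible for every $t\in\R$. I would choose an orthonormal basis adapted to $\R^n = \im\ma{P}\oplus\im\ma{Q}$ and partition $\ma[0]{R}$ into blocks with $(1,1)$-entry $\ma{A}\in\R^{r\times r}$, where $r=\rank\ma{P}$. A direct block calculation then reveals that $\ma{X}$ is block-diagonal with diagonal blocks $\cosh(t)\ma[r]{I}+\sinh(t)\ma{A}$ and $\ma[n-r]{I}$, so invertibility reduces to the statement $-\coth(t)\notin\sigma(\ma{A})$. Because $\ma{A}$ is a principal sub-block of the orthogonal matrix $\ma[0]{R}$, $\|\ma{A}\|_2\leq 1$, and every eigenvalue of $\ma{A}$ lies in the closed unit disk; on the other hand, $|\coth(t)|>1$ for all $t\neq 0$, while $\ma{X}(0)=\ma{I}$ outright. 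This spectral argument, resting on the orthogonality of $\ma[0]{R}$, is precisely where the hypothesis $\ma[0]{H}\in\mathcal{H}$ is used.

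Uniqueness then follows from Picard--Lindelöf because the map $\ma{H}\mapsto\ma{P}-\ma{H}^2$ is smooth and the explicit formula supplies global existence; since $\ma{Y}\ma{X}\inv$ satisfies the ODE and meets $\ma[0]{H}$ at $t=0$, it is the unique solution. That the trajectory remains in $\mathcal{H}$ can be certified either (i) by computing $\ma{H}^\top\ma{H}$ in the adapted basis and invoking the orthogonality identity $\ma{A}^\top\ma{A}+\ma{C}^\top\ma{C}=\ma[r]{I}$, with $\ma{C}$ the $(2,1)$-block of $\ma[0]{R}$, to obtain $\ma{H}^\top\ma{H}=\ma{P}$; or (ii) by noting that $\ma{R}(t)\ma{P}$ from the full $\SO$ flow solves the same Cauchy problem inside $\mathcal{H}$ and appealing to uniqueness.
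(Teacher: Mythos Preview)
Your proof is correct and follows essentially the same verification strategy as the paper: write $\ma{H}=\ma{Y}\ma{X}^{-1}$, differentiate to get $\md{H}=\ma{P}-\ma{H}\ma{P}\ma{H}$, reduce to $\ma{P}-\ma{H}^2$ via $\ma{H}\ma{P}=\ma{H}$, and invoke local Lipschitz continuity for uniqueness. Your treatment is in fact more complete than the paper's, which merely asserts ``it can be shown'' that the denominator is invertible and leaves the step $\ma{H}\ma{P}\ma{H}=\ma{H}^2$ implicit; your block-spectral argument ($\lvert\coth t\rvert>1\geq\rho(\ma{A})$) and your explicit check that $\ma{H}\ma{Q}=\zeromatrix$ fill those gaps cleanly.
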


\begin{proof} 
A proof of global existence and uniqueness is given by Lemma \ref{leB:unique} in Appendix \ref{appB:lemmas}. Denote $\ma{H}(t)=\ma{X}(t)\ma{Y}(t)\inv$, where $\ma{X}(t)=\sinh(\ma{P}t)+\cosh(\ma{P}t)\ma[0]{H}$, $\ma{Y}(t)=\cosh(\ma{P}t)+\sinh(\ma{P}t)\ma[0]{H}$. It can be shown that $\ma{Y}\inv(t)$ is well-defined for all $t\in[0,\infty)$. Note that $\md{X}(t)=\ma{P}\ma{Y}(t)$ and $\md{Y}(t)=\ma{P}\ma{X}(t)$. The proof is by verification that $\ma{H}(t)$ satisfies \eqref{eq:RP}, 
\begin{align*}
\md{H}(t)&=\md{X}(t)\ma{Y}(t)\inv-\ma{X}(t)\ma{Y}(t)\inv\md{Y}(t)\ma{Y}(t)\inv\\
&=\ma{P}\ma{Y}(t)\ma{Y}(t)\inv-\ma{X}(t)\ma{Y}(t)\inv\ma{P}\ma{X}(t)\ma{Y}(t)\inv\\
&=\ma{P}-\ma{H}(t)\ma{P}\ma{H}(t)=\ma{P}-\ma{H}(t)^2.
\end{align*}
Moreover, $\ma{H}(0)=\ma[0]{H}$.\end{proof}

Introduce the set of rotation matrices with partly negative spectrum,
\begin{align*}
\NO=\{\ma{R}\in\SO\,|\,-1\in\sigma(\ma{R})\}.
\end{align*}
Observe that $\NO$ is a set of zero measure in $\SO$. One can show that $\mathcal{N}=\{\ma{R}\in\SOT\,|\,\mat{R}=\ma{R}\}\backslash\{\ma{I}\}$ in the case of $\SOT$, but such a relation does not hold in higher dimensions as illustrated by the matrix
\begin{align*}
\ma{R}=\begin{bmatrix}
\,\ma[11]{R} & \maspace\phantom{-}\ma{0}\\
\ma{0} & \maspace-\ma{I}
\end{bmatrix}\in\SO
\end{align*}
which belongs to $\mathcal{N}$ for all $\ma[11]{R}\in\mathrm{SO}(n-2)$, where $n\geq4$.

\begin{proposition}\label{prop:RPstable}
The system $\md{H}=\ma{P}-\ma{H}^2$ over the homogeneous space  $\mathcal{H}$ given by \eqref{eq:hspace} converges to the equilibrium manifold  $\{\ma{H}\in\mathcal{H}\,|\,\ma{H}^2=\ma{P}\}$. The equilibrium $\ma{H}=\ma{P}$ is almost globally asymptotically stable.
\end{proposition}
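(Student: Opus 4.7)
My plan is to combine a Lyapunov/LaSalle argument for the first claim with a linearization at each equilibrium for the second.

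Take the Lyapunov candidate $V(\ma{H}) = \tfrac12\|\ma{P} - \ma{H}\|_F^2$. Since every $\ma{H} = \ma{R}\ma{P} \in \mathcal{H}$ satisfies $\ma{H}\mtr\ma{H} = \ma{P}$, expansion collapses $V$ to $\trace(\ma{P}) - \trace(\ma{P}\ma{H})$, and differentiation along $\md{H} = \ma{P} - \ma{H}^2$ gives $\dot V = \trace(\ma{P}\ma{H}^2) - \trace(\ma{P})$. To sign this, place coordinates so $\ma{P} = \diag(\ma{I}_r, \ma{0})$ and write $\ma{R}$ in conforming block form with top-left block $\ma{A}$ of size $r$; then $\trace(\ma{P}\ma{H}^2) = \trace(\ma{A}^2)$. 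Splitting $\ma{A} = \ma{S} + \ma{K}$ into symmetric and skew parts, and using $\trace(\ma{S}\ma{K}) = 0$, yields the identities $\trace(\ma{A}^2) = \trace(\ma{S}^2) + \trace(\ma{K}^2)$ and $\trace(\ma{A}\mtr\ma{A}) = \trace(\ma{S}^2) - \trace(\ma{K}^2)$, whence
\begin{align*}
\trace(\ma{A}^2) = \trace(\ma{A}\mtr\ma{A}) + 2\trace(\ma{K}^2) \leq \trace(\ma{A}\mtr\ma{A}) \leq r,
\end{align*}
where the last bound uses $\ma{A}\mtr\ma{A} \leq \ma{I}_r$ because $\ma{A}$ is a principal submatrix of an orthogonal matrix. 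Equality holds precisely when $\ma{K} = \ma{0}$ and $\ma{A}\mtr\ma{A} = \ma{I}_r$, i.e.\ when $\ma{A}^2 = \ma{I}_r$, equivalently $\ma{H}^2 = \ma{P}$. Since $\mathcal{H}$ is compact, LaSalle's invariance principle delivers convergence to $\{\ma{H} \in \mathcal{H} \,:\, \ma{H}^2 = \ma{P}\}$, proving the first claim.

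For almost global asymptotic stability of $\ma{P}$ I will linearize at each equilibrium. The block form splits the dynamics as $\md{A} = \ma{I}_r - \ma{A}^2$ and $\md{B} = -\ma{B}\ma{A}$; at an equilibrium $(\ma{A}_*, \ma{0})$ with $\ma{A}_*^2 = \ma{I}_r$ the top-block linearization is $\dot{\ma{\Delta}} = -(\ma{A}_*\ma{\Delta} + \ma{\Delta}\ma{A}_*)$, whose operator has eigenvalues $-(\mu_i + \mu_j)$ with $\mu_i, \mu_j \in \sigma(\ma{A}_*) \subset \{-1, +1\}$. At $\ma{A}_* = \ma{I}_r$, i.e.\ $\ma{H} = \ma{P}$, all such eigenvalues equal $-2$, which together with the linearization $\dot{\ma{E}} = -\ma{E}$ of the lower block gives local exponential stability of $\ma{P}$. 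Any other equilibrium has $-1 \in \sigma(\ma{A}_*)$ and hence produces a positive eigenvalue $+2$, so its stable set has positive codimension in $\mathcal{H}$ and Lebesgue measure zero. The union of the stable sets of the non-$\ma{P}$ equilibria is therefore negligible, and all remaining trajectories converge to $\ma{P}$.

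The main algebraic obstacle is the inequality $\trace(\ma{A}^2) \leq r$, which fails for arbitrary square matrices; the symmetric/skew decomposition combined with the constraint $\ma{A}\mtr\ma{A} \leq \ma{I}_r$ is what makes it go through. As an alternative to the linearization argument I can invoke the exact solution of the preceding proposition, which in the same block coordinates reduces to a matrix M\"obius form in $\tanh(t)$ whose $t \to \infty$ limit equals $\ma{P}$ precisely on the complement of $\{\ma[0]{R} \in \SO \,:\, -1 \in \sigma(\ma{P}\ma[0]{R}\ma{P}|_{\mathrm{im}\,\ma{P}})\}$, a proper algebraic subvariety of $\SO$ and hence a set of Lebesgue measure zero.
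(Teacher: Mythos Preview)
Your LaSalle argument is correct and in fact uses the same Lyapunov function as the paper, $V=\trace(\ma{P}-\ma{H})$; you bound $\dot V$ via the symmetric/skew decomposition of the top-left block $\ma{A}$, whereas the paper bounds it through the eigenvalues of $\ma{H}$. Both routes work and yield the same equality set.

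The linearization argument for almost global stability has two gaps. First, you compute the eigenvalues $-(\mu_i+\mu_j)$ of $\ma{\Delta}\mapsto-(\ma{A}_*\ma{\Delta}+\ma{\Delta}\ma{A}_*)$ on all of $\R^{r\times r}$, but $\mathcal{H}$ is a Stiefel-type submanifold and only tangent perturbations count: at an equilibrium $(\ma{A}_*,\ma{0})$ the admissible $\ma{\Delta}$ are exactly those with $\ma{A}_*\ma{\Delta}$ skew. The $+2$ eigenvector $\ve{v}\ve{v}^\top$ coming from a single $\mu=-1$ eigenvector satisfies $\ma{A}_*\ve{v}\ve{v}^\top=-\ve{v}\ve{v}^\top$, which is symmetric, not skew, so it is not tangent. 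When $\ma{A}_*$ has exactly one $-1$ eigenvalue---which can occur whenever $\rank\ma{P}<n$---there is no tangent $+2$ eigenvector in the $\ma{A}$-block; the actual unstable direction lives in the $\ma{B}$-block with eigenvalue $+1$. Second, and more seriously, the non-$\ma{P}$ equilibria form a manifold of positive dimension (the zero eigenvalues $-(\mu_i+\mu_j)=0$ for $\mu_i=-\mu_j$ are precisely its tangent directions), so each equilibrium is non-hyperbolic. Even granting that every individual stable set has measure zero, you cannot conclude that their \emph{uncountable} union does without invoking normal hyperbolicity of the equilibrium manifold or a pointwise-convergence/Freeman-type result. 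The paper avoids both issues by taking your proposed alternative route: it plugs the explicit solution from the preceding proposition into $\ma{P}\ma{R}(t)\ma{P}$ and $\ma{Q}\ma{R}(t)\ma{P}$, shows both limits equal $\ma{P}$ and $\ma{0}$ respectively whenever $-1\notin\sigma(\ma{P}\ma[0]{R}\ma{P})$, and then uses a short lemma to conclude this holds for all $\ma[0]{R}$ outside the measure-zero set $\NO=\{\ma{R}\in\SO:-1\in\sigma(\ma{R})\}$.
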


\begin{proof}
Consider the candidate Lyapunov function $V=\trace(\ma{P}-\ma{H})$. Since
\begin{align*}
\dot{V}=-\trace(\ma{P}-\ma{H}^2)=-p+\sum_{i=1}^p \lambda_i^2=-p+\sum_{i=1}^p a_i^2-b_i^2,
\end{align*} 
where $p=\rank\ma{P}$ and $\lambda_i=a_i+ib_i$ for $i\in\{1,\ldots,p\}$ are eigenvalues of $\ma{H}$ (the eigenvalue zero has at least algebraic multiplicity $n-p$). Note that $\rho\leq\|\ma{H}\|_2\leq\|\ma{R}\|_2\|\ma{P}\|_2=1$, where $\rho$ is the spectral radius of $\ma{H}$, implies $a_i^2+b_i^2\leq 1$. It follows that
\begin{align*}
\dot{V}\leq-p+\sum_{i=1}^p 1-b_i^2-b_i^2=-2\sum_{i=1}^p b_i^2,
\end{align*} 
which is negative semidefinite. The spectrum of $\ma{H}$ converges to $\{-1,0,1\}$ as time goes to infinity by LaSalle's invariance principle. 

Note that if $(\lambda,\ve{v})$ is an eigenpair of $\ma{H}=\ma{R}\ma{P}$ with $\lambda\in\{-1,1\}$, then $\ma{P}\ve{v}=\ve{v}$, $\ma{R}\ve{v}=\lambda\ve{v}$. Let $\mathcal{V}=\{\ve[1]{v},\ldots,\ve[p]{v}\}$ be a linearly independent set of eigenvectors with $\lambda_i\in\{-1,1\}$   that maximizes $|\mathcal{V}|=p$. Since $\ma{R}$ is normal, there is a basis $\{\ve[1]{v},\ldots,\ve[n]{v}\}$ of $\R^n$ where $(\lambda_i,\ve[i]{v})$ are eigenpairs of $\ma{R}$ with $\lambda_i\in\{-1,1\}$ for $i\in\{1,\ldots,p\}$ and  $\ma{P}\ve[i]{v}=\ve{0}$ for $i\in\{p+1,\ldots,n\}$. Clearly, $(\ma{P}-\ma{H}^2)\ve[i]{v}=\ve{0}$ if $i\in\{p+1,\dots,n\}$ and
\begin{align*}
(\ma{P}-\ma{H}^2)\ve[i]{v}=(1-\lambda^2)\ve{v}=\ve{0}
\end{align*}
if $i\in\{1,\ldots,p\}$. The matrix $\ma{P}-\ma{H}^2$ maps a basis of $\R^n$ to zero, and is therefore zero. It follows that $\ma{R}\ma{P}$ converges to $\{\ma{H}\in\mathcal{H}\,|\,\ma{H}^2=\ma{P}\}$. 

Lemma \ref{le:PRP} in Appendix \ref{appB:lemmas} tells us that $-1\notin\sigma(\ma[0]{R})$ implies $-1\notin\sigma(\ma{P}\ma[0]{R}\ma{P})$. Suppose $\ma[0]{R}\in\SO\backslash\NO$. Proposition \ref{th:RP} and some calculations yield
\begin{align*}
\lim_{t\rightarrow\infty}\ma{P}\ma{R}(t)\ma{P}={}&\lim_{t\rightarrow\infty}\ma{P}[\tanh (t)\,\ma{I}+\ma{P}\ma[0]{R}\ma{P}][\ma{I}+\tanh(t)\ma{P}\ma[0]{R}\ma{P}]\inv=\ma{P},\\
\lim_{t\rightarrow\infty}\ma{Q}\ma{R}(t)\ma{P}={}&\lim_{t\rightarrow\infty}\ma{Q}\ma[0]{R}\ma{P}[\cosh(\ma{P}t)+\sinh(\ma{P}t)\ma[0]{R}\ma{P}]\inv=\ma{0},
\end{align*}
which implies $\lim_{t\rightarrow\infty}\ma{R}(t)\ma{P}=\ma{P}$ since $\ma{P}+\ma{Q}=\ma{I}$. The matrix $\ma{P}$ is an almost global attractor due to $\{\ma{H}\in\R^{n\times n}\,|\,\ma{H}=\ma{R}\ma{P},\,\ma{R}\in\NO\}$ being a set of zero measure in $\mathcal{H}$.
\end{proof}

\section{The Full Attitude}

\noindent To establish almost global asymptotic stability  poses a challenge since the set of undesired equilibria is spread out through $\SO$. The proof consists of four parts: (i) LaSalle's invariance principle is used to characterize the set of all equilibria $\mathcal{M}$; (ii) the local stability properties of each equilibria $\ma{R}\in\mathcal{M}$ is studied using the indirect method of Lyapunov; (iii) by normal hyperbolicity of $\mathcal{M}$, it is shown that the $\omega$-limit set of any system trajectory is a singleton; and finally, by (ii) and (iii) it becomes possible to draw conclusions regarding the global behavior of the system based on a local analysis of all $\ma{R}\in\mathcal{M}$.

\subsection{LaSalle's Invariance Principle}

\noindent Let us show that $\ma{R}$ converges to an equilibrium set consisting of symmetric rotation matrices.

%

\begin{proposition}\label{th:symmetric}
The closed-loop dynamics generated by Algorithm \ref{algo:2014} converges to the set of equilibria $\mathcal{M}\subset\SO$ given by
\begin{align*} \mathcal{M}=\{\ma{R}\in\SO\,|\,\ma{R}\mtr=\ma{R},\,[\ma{R},\ma{P}]=\ma{0}\}\subset\{\ma{I}\}\cup\NO.
\end{align*}
\end{proposition}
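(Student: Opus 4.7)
The plan is to combine Proposition~\ref{prop:RPstable}'s Lyapunov analysis with a second LaSalle argument that exploits the cascade structure of the closed loop: the $\ma{R}\ma{P}$-dynamics decouple from the rest of $\ma{R}$, so I would deal with them first and then handle the residual $\ma{Q}$-block on the resulting invariant set.

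First, take $V = \trace\ma{P} - \trace(\ma{R}\ma{P})$. Because $\ma{Q}\ma{P} = \ma{0}$, the $k$-term in \eqref{eq:closed2014} contributes nothing after right-multiplication by $\ma{P}$, so $\dot{V} = \trace((\ma{R}\ma{P})^2 - \ma{P})$. The eigenvalue estimate from the proof of Proposition~\ref{prop:RPstable} shows $\dot{V}\le 0$, with equality iff the nonzero eigenvalues of $\ma{R}\ma{P}$ all lie in $\{-1,1\}$; combined with $(\ma{R}\ma{P})\mtr(\ma{R}\ma{P}) = \ma{P}$ and the normality of $\ma{R}$ argued there, this forces $(\ma{R}\ma{P})^2 = \ma{P}$. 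LaSalle therefore delivers convergence to the largest invariant set $E$ of $\{\ma{R}\in\SO : (\ma{R}\ma{P})^2 = \ma{P}\}$.

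On $E$, $\frac{\diff}{\diff t}(\ma{R}\ma{P}) = \ma{P} - (\ma{R}\ma{P})^2 = \ma{0}$, so $\ma{R}\ma{P}$ is constant. Decomposing $\R^n = \mathrm{range}(\ma{P}) \oplus \mathrm{range}(\ma{Q})$ and writing $\ma{R}$ in the corresponding block form $\bigl(\begin{smallmatrix}\ma{A} & \ma{B}\\ \ma{C} & \ma{D}\end{smallmatrix}\bigr)$, the identity $(\ma{R}\ma{P})^2 = \ma{P}$ reads $\ma{A}^2 = \ma{I}$ and $\ma{C}\ma{A} = \ma{0}$. Since $\ma{A}^2 = \ma{I}$ makes $\ma{A}$ invertible, $\ma{C} = \ma{0}$; then the orthogonality relations $\mat{A}\ma{A} + \mat{C}\ma{C} = \ma{I}$ and $\mat{A}\ma{B} + \mat{C}\ma{D} = \ma{0}$ force $\mat{A} = \ma{A}$ and $\ma{B} = \ma{0}$. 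Thus $[\ma{R},\ma{P}] = \ma{0}$ on $E$, $\ma{A}$ is symmetric, and $\ma{D}$ is orthogonal. The closed loop then restricts on $E$ to $\dot{\ma{D}} = k(\ma{I}_{n-p} - \ma{D}^2)$, since $\ma{P} - \ma{R}\ma{P}\ma{R}$ vanishes block-by-block.

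A second LaSalle application on $E$ with $W = \trace\ma{I}_{n-p} - \trace\ma{D}$ gives $\dot{W} = -k\trace(\ma{I}_{n-p} - \ma{D}^2) = -2k\sum_j \sin^2\theta_j \le 0$ in terms of the eigenvalues $\e^{\pm i\theta_j}$ of $\ma{D}$, with equality iff $\ma{D}$ is symmetric. Hence the $\omega$-limit of every trajectory lies in $\{\ma{R}\in\SO : \mat{R}=\ma{R},\,[\ma{R},\ma{P}]=\ma{0}\} = \mathcal{M}$. The inclusion $\mathcal{M}\subset\{\ma{I}\}\cup\NO$ is immediate, since every $\ma{R}\in\mathcal{M}$ is symmetric orthogonal with $\sigma(\ma{R})\subset\{-1,1\}$, and $\det\ma{R} = 1$ makes the multiplicity of $-1$ even and positive whenever $\ma{R}\neq\ma{I}$. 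The main obstacle is that no single natural Lyapunov function on $\SO$ seems to detect the $\ma{P}$- and $\ma{Q}$-block convergence simultaneously, so LaSalle has to be applied twice: first on the full system to localize to $E$, then on the autonomous reduced flow that $\ma{D}$ follows inside $E$.
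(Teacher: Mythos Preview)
Your cascade strategy is genuinely different from the paper's. The paper works with the single Lyapunov function $V=\trace(\ma{I}-\ma{R})$ on all of $\SO$, applies LaSalle once to land on $\{\ma{Q}\ma{R}\ma{Q}=\ma{Q}\mat{R}\ma{Q},\ \ma{R}^2\ma{P}=\ma{P}\}$, intersects with the asymptotic constraint $(\ma{R}\ma{P})^2=\ma{P}$ from Proposition~\ref{prop:RPstable}, and finishes with an eigenvalue argument showing $\sigma(\ma{R})\subset\{-1,1\}$. You instead peel off the $\ma{R}\ma{P}$-dynamics first, read off the block structure on $E$, and reduce to $\dot{\ma{D}}=k(\ma{I}-\ma{D}^2)$. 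Your route makes the commutation $[\ma{R},\ma{P}]=\ma{0}$ and the residual flow completely explicit, which is nice.

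There is, however, a real gap in the second LaSalle step. Knowing that $\omega(\ma[0]{R})\subset E$ and that trajectories \emph{starting in} $E$ converge to $\mathcal{M}$ does not by itself give $\omega(\ma[0]{R})\subset\mathcal{M}$: the original trajectory never lies in $E$, so $W(\ma{R}(t))$ need not be monotone, and you cannot conclude that $W$ is constant on $\omega(\ma[0]{R})$ by the standard LaSalle mechanism. Iterated LaSalle of this kind requires extra input (e.g.\ internal chain transitivity of $\omega$-limit sets, or an asymptotically-autonomous-systems argument for the $\ma{D}$-equation). Here there is a one-line fix that simultaneously refutes your closing ``obstacle'': since $\trace(\ma{P}\ma{R}\ma{Q})=\trace(\ma{Q}\ma{P}\ma{R})=0$, your two pieces add up to
\[
V+W=\trace\ma{P}-\trace(\ma{R}\ma{P})+\trace\ma{Q}-\trace(\ma{Q}\ma{R}\ma{Q})=\trace(\ma{I}-\ma{R}),
\]
which \emph{is} a single global Lyapunov function---exactly the one the paper uses. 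Both $V$ and $V+W$ are then monotone along the original trajectory, hence constant on $\omega(\ma[0]{R})$; subtracting, $W$ is constant there too, whence $\dot{W}=0$ on $\omega(\ma[0]{R})\subset E$ and $\omega(\ma[0]{R})\subset\mathcal{M}$ follows.
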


\begin{proof}
Consider the candidate Lyapunov function $V=\trace(\ma{I}-\ma{R})$ whose time-derivative satisfies
\begin{align}
\dot{V}&=-\trace\ma{P}+\trace\ma{P}\ma{R}^2-k\trace\ma{R}\ma{Q}\mat{R}\ma{Q}+k\trace\ma{R}\ma{Q}\ma{R}\ma{Q}\nonumber\\
&=-\trace\ma{P}^2+\langle\ma{P},\ma{R}^2\rangle-k\|\ma{Q}\ma{R}\ma{Q}\|^2_F+k\trace(\ma{Q}\ma{R}\ma{Q})^2\nonumber\\
&=-\|\ma{P}\|^2_F+\langle\ma{P},\ma{R}^2\rangle-k\|\ma{Q}\ma{R}\ma{Q}\|^2_F+k\langle\ma{Q}\mat{R}\ma{Q},\ma{Q}\ma{R}\ma{Q}\rangle.\label{eq:Vd}
\end{align}
The inequality $\langle\ma{A},\ma{B}\rangle\leq\|\ma{A}\|_*\|\ma{B}\|_2$, where $\ma{A},\,\ma{B}\in\R^{n\times n}$ and $\|\cdot\|_*$ denotes the nuclear norm, implies that $\langle\ma{P},\ma{R}^2\rangle\leq\|\ma{P}\|_*\|\ma{R}^2\|_2=\|\ma{P}\|_*=\rank\ma{P}=\|\ma{P}\|_F^2$. The Cauchy-Schwarz inequality gives $\langle\ma{Q}\mat{R}\ma{Q},\ma{Q}\ma{R}\ma{Q}\rangle\leq\|\ma{Q}\ma{R}\ma{Q}\|^2_F$, \ie $\dot{V}$ is negative semidefinite.


The matrix $\ma{R}$  converges to the largest invariant set satisfying $\langle\ma{P},\ma{R}^2\rangle=\|\ma{P}\|_F^2$ and $\langle\ma{Q}\mat{R}\ma{Q},\ma{Q}\ma{R}\ma{Q}\rangle=\|\ma{Q}\ma{R}\ma{Q}\|^2_F$ by LaSalle's invariance principle. The latter equality gives $\ma{Q}\ma{R}\ma{Q}=\ma{Q}\mat{R}\ma{Q}$ whereas the former yields $\ma{R}^2\ma{P}=\ma{P}$ after some calculations. To that end, let $\ma{P}=\ma{O}\ma{\Pi}\mat{O}$, where $\ma{O}\in\mathsf{O}(n)$, express the spectral decomposition of $\ma{P}$. Then $\trace\ma{P}\ma{R}^2=\trace\ma{P}$ implies $\trace\mat{O}\ma{R}^2\ma{O}\ma{\Pi}=\trace\ma{\Pi}$ which requires $\mat{O}\ma{R}^2\ma{O}\ma{\Pi}=\ma{\Pi}$, \ie $\ma{R}^2\ma{P}=\ma{P}$, due to $\ma{\Pi}$ being diagonal and $\mat{O}\ma{R}^2\ma{O}\in\SO$.

Let us show that the closed loop system converges to a set of equilibria. Proposition \ref{prop:RPstable} implies that $(\ma{R}\ma{P})^2=\ma{P}$ asymptotically, which implies $\ma{P}\ma{R}\ma{P}=\mat{R}\ma{P}$. Substitute $\ma{P}=\ma{R}^2\ma{P}$ to obtain $\ma{P}\ma{R}\ma{P}=\mat{R}\ma{P}=\ma{R}\ma{P}$, whereby
\begin{align*}
\ma{R}\ma{P}\ma{R}=\ma{P}\ma{R}\ma{P}\ma{R}=(\mat{R}\ma{P}\mat{R}\ma{P})\mtr=(\ma{R}\ma{P}\ma{R}\ma{P})\mtr=\ma{P}.
\end{align*}
From $\ma{R}\ma{P}\ma{R}=\ma{P}$ and $\ma{Q}\ma{R}\ma{Q}=\ma{Q}\mat{R}\ma{Q}$ it follows that $\md{R}$ given by \eqref{eq:closed2014} is the zero matrix, \ie the system converges to a set of equilibria.


Take any eigenpair $(\lambda,\ve{v})$ of $\ma{R}$. Since $\ma{RPR}=\ma{P}$ gives $\ma{PR}=\mat{R}\ma{P}$, it follows that
\begin{align*}
\lambda\|\ma{P}\ve{v}\|^2_2&=\lambda\langle\ma{P}\ve{v},\ma{P}\ve{v}\rangle=\langle\lambda\ma{P}\ve{v},\ve{v}\rangle=\langle\ma{P}\ma{R}\ve{v},\ve{v}\rangle=\langle\mat{R}\ma{P}\ve{v},\ve{v}\rangle\\
&=\langle\ve{v},\ma{P}\ma{R}\ve{v}\rangle=\langle\ve{v},\lambda\ma{P}\ve{v}\rangle=\lambda^*\langle\ve{v},\ma{P}\ve{v}\rangle=\lambda^*\|\ma{P}\ve{v}\|^2_2,
\end{align*}
it either holds that $\lambda^*=\lambda$ or $\ve{P}\ve{v}=\ma{0}$. Consider the latter case. Then $\ma{Q}\ve{v}=\ve{v}$ whereby $\lambda\ve{v}=\ma{Q}\ma{R}\ma{Q}\ve{v}=\ma{Q}\mat{R}\ma{Q}\ve{v}=\lambda^*\ve{v}$. So $\lambda^*=\lambda$, whereby $\lambda\in\{-1,1\}$. Let $\ma{R}=\ma{U}\ma{\Lambda}\ma{U}^*$ denote a spectral factorization of $\ma{R}$. Clearly $\mat{R}=\ma{R}^*=\ma{R}$, \ie $\ma{R}$ is symmetric. Moreover, $\ma{PR}=\mat{R}\ma{P}=\ma{RP}$ implies that $[\ma{R},\ma{P}]=\ma{0}$.\end{proof} 

\begin{remark}
It can be shown that $\|\ma{U}\|^2_F=-2\dot{V}$, where $\dot{V}$ is given by \eqref{eq:Vd}, which bounds the $L^2$-norm of $\ma{U}$ as $\int_0^\infty\|\ma{U}\|^2_F\,\diff t\leq2V(0)$.
\end{remark}

\begin{proposition}\label{prop:cool}
The equilibrium set $\mathcal{M}$ in Proposition \ref{th:symmetric} admits a decomposition as a finite union
\begin{align*}
\mathcal{M}&=\left(\bigcup_{i\in\mathcal{E}(n)}\mathcal{M}_i\cap\mathcal{P}\right)\cup\{\ma{I}\},
\end{align*}
where
\begin{align*} 
\mathcal{M}_i&=\{\ma{R}\in\SO\,|\,\mat{R}=\ma{R},\,V(\ma{R})=2i\},\\
\mathcal{P}&=\{\ma{A}\in\R^{n\times n}\,|\,[\ma{A},\ma{P}]=\ma{0}\},
\end{align*}
$\mathcal{E}(n)=\{2,\ldots,2\left\lfloor\tfrac{n}{2}\right\rfloor\}$, and $V=\trace(\ma{I}-\ma{R})$. Each differentiable manifold $\mathcal{M}_i\cap\mathcal{P}$ is path connected and separated by a continuous function from the others. The $\omega$-limit set of any solution $\ma{R}(t)$ to \eqref{eq:closed2014} either equals $\{\ma{I}\}$ or is a subset of $\mathcal{M}_i\cap\mathcal{P}$ for some $i\in\mathcal{E}$.
\end{proposition}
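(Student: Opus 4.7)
The plan is to describe $\mathcal{M}$ explicitly, exhibit it as a finite disjoint union of smooth strata on which the Lyapunov function $V$ is piecewise constant, and then combine continuity of $V$ with connectedness of $\omega$-limit sets to confine each limit set to a single stratum.

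First I would invoke Proposition~\ref{th:symmetric} to write every $\ma{R}\in\mathcal{M}$ as symmetric and commuting with $\ma{P}$. Symmetry combined with orthogonality gives $\ma{R}^2=\mat{R}\ma{R}=\ma{I}$, so $\sigma(\ma{R})\subset\{-1,1\}$, and $\det\ma{R}=1$ forces the multiplicity of $-1$ to equal some even number $2k$ with $k\in\{0,1,\ldots,\lfloor n/2\rfloor\}$, giving $V(\ma{R})=n-\trace\ma{R}=4k$. Passing to an orthonormal eigenbasis of $\ma{P}$ so that $\ma{P}=\diag(\ma{I}_p,\ma{0}_{n-p})$ with $p=\rank\ma{P}$, the commutation relation forces $\ma{R}=\diag(\ma{A},\ma{B})$ block diagonal with symmetric orthogonal blocks $\ma{A},\ma{B}$, each being the reflection through its $(-1)$-eigenspace. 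Writing $a$ and $b$ for the dimensions of those eigenspaces, one obtains $V(\ma{R})=2(a+b)$ and $(-1)^{a+b}=\det\ma{R}=1$, pinning down the admissible pairs and making $i:=a+b$ even.

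Next I would assemble the decomposition. For each admissible $i$,
\begin{equation*}
\mathcal{M}_i\cap\mathcal{P}=\bigsqcup_{\substack{a+b=i\\ 0\le a\le p,\ 0\le b\le n-p}}\mathrm{Gr}(a,p)\times\mathrm{Gr}(b,n-p),
\end{equation*}
a finite disjoint union of smooth, compact, path-connected manifolds (each Grassmannian factor being such), while $k=0$ accounts for the isolated summand $\{\ma{I}\}$. Distinct level sets $\mathcal{M}_i\cap\mathcal{P}$ are separated by the globally continuous function $V$, which is identically $2i$ on $\mathcal{M}_i\cap\mathcal{P}$. For the $\omega$-limit claim I would use that $V$ is nonincreasing along trajectories (from the proof of Proposition~\ref{th:symmetric}), so $V(\ma{R}(t))\to V^\ast$ and $\omega(\ma[0]{R})\subset\mathcal{M}$; continuity of $V$ forces $V\equiv V^\ast$ on $\omega(\ma[0]{R})$, and because $V$ attains only the discrete values $\{0,4,\ldots,4\lfloor n/2\rfloor\}$ on $\mathcal{M}$, either $V^\ast=0$ and $\omega(\ma[0]{R})=\{\ma{I}\}$, or $V^\ast=2i$ for some $i\in\mathcal{E}(n)$ and $\omega(\ma[0]{R})\subset\mathcal{M}_i\cap\mathcal{P}$.

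The most delicate point is the path-connectedness claim for $\mathcal{M}_i\cap\mathcal{P}$: different admissible splittings $(a,b)$ with $a+b=i$ yield components that cannot be joined by a path staying inside $\mathcal{P}$, since a continuous deformation would have to transport a $-1$ eigenvalue between the range and kernel of $\ma{P}$. The plan is to treat each product of Grassmannians above as the primitive smooth path-connected stratum and observe that this refinement is all the $\omega$-limit argument requires, because connectedness of $\omega(\ma[0]{R})$ then places the limit set inside exactly one such stratum, which is the usable fact for the subsequent normal-hyperbolicity analysis.
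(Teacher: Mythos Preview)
Your decomposition and the $\omega$-limit argument track the paper's reasoning closely: both use that $V=\trace(\ma{I}-\ma{R})$ is nonincreasing along trajectories and attains only the discrete values $\{0,4,\ldots,4\lfloor n/2\rfloor\}$ on $\mathcal{M}$, so the $\omega$-limit set, on which $V$ is constant, must sit inside a single level set.

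Where you genuinely diverge is on path-connectedness, and here your structural route via Grassmannians is not merely different but sharper than the paper's. The paper attempts to join any $\ma{X},\ma{Y}\in\mathcal{M}_i\cap\mathcal{P}$ by the explicit curve $\ma{Z}(t)=\ma{Y}^{t/2}\ma{X}^{1-t}\ma{Y}^{t/2}$ built from real non-principal logarithms, and argues that each factor is symmetric because it is a matrix function of a symmetric matrix. That step is problematic: the real logarithm of a symmetric orthogonal matrix with $-1$ in its spectrum is skew-symmetric, not obtainable as a primary matrix function in Higham's sense (the primary function $\lambda\mapsto\lambda^{s}$ is non-real at $\lambda=-1$), so $\exp(s\log\ma{X})$ is orthogonal but generally \emph{not} symmetric. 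Concretely, with $n=4$, $\ma{P}=\diag(\ma{I}_2,\ma{0})$, $\ma{X}=\diag(-\ma{I}_2,\ma{I}_2)$, $\ma{Y}=\diag(\ma{I}_2,-\ma{I}_2)$ one computes $\ma{Z}(\tfrac12)$ to be a block-diagonal pair of quarter-turn rotations, which is not symmetric. Your counterexample observation is therefore on point: $\mathcal{M}_i\cap\mathcal{P}$ is in general \emph{not} path connected, since a continuous block-diagonal path cannot change the integer $a=\dim(\text{$(-1)$-eigenspace}\cap\operatorname{range}\ma{P})$.

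Your proposed remedy---refine to the strata $\mathrm{Gr}(a,p)\times\mathrm{Gr}(b,n-p)$ and use connectedness of the $\omega$-limit set to land in a single stratum---is exactly what the downstream normal-hyperbolicity argument (Proposition~\ref{prop:aulbach}/\ref{prop:conv}) actually needs: a connected equilibrium manifold whose tangent space matches $\ker\ma{F}$. Indeed, the kernel computed in Proposition~\ref{prop:zeros} decomposes along the same block structure, so the dimension count in Proposition~\ref{prop:conv} goes through on each Grassmannian product. In short, your approach both exposes a gap in the paper's path-connectedness claim and supplies the correct replacement.
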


\begin{proof} The elements of the set $\mathcal{M}$ are symmetric by Proposition \ref{th:symmetric}. Let $-1\in\sigma(\ma{R})$ have algebraic multiplicity $i\in\mathcal{E}$. Then $\trace\ma{R}=(n-i)\cdot1+i\cdot(-1)=n-2i$, \ie $V(\ma{R})=2i$. 
	
Let $\ma{X},\ma{Y}\in\mathcal{M}_i$. Form a curve $\ma{Z}:[0,1]\rightarrow\SO$ from $\ma{X}$ to $\ma{Y}$ by $\ma{Z}(t)=\ma{Y}^{\frac{t}{2}}\ma{X}^{1-t}\ma{Y}^\frac{t}{2}$. Since $-1\in\sigma(\ma{R})$, we need to use a non-principal matrix logarithm $\log$ to calculate $\ma{Z}$. Let the branch-cut of $\log$ be non-real. We may write $\ma{Z}(t)=\exp[\tfrac{t}{2}\log\ma{Y}]\exp[(1-t)\log\ma{X}]\exp[\tfrac{t}{2}\log\ma{Y}]$, since $\ma{X}$ and $\ma{Y}$ have real non-principal matrix logarithms \citep{culver1966existence}. Note that $\ma{Z}$ is a symmetric rotation matrix;  each factor is a matrix function of a symmetric matrix and hence symmetric \citep{higham2008functions},
\begin{align*}	
\mat{Z}(t)={}&\exp[\tfrac{t}{2}\log\ma{Y}]\mtr\exp[(1-t)\log\ma{X}]\mtr\exp[\tfrac{t}{2}\log\ma{Y}]\mtr\\
={}&\exp[\tfrac{t}{2}\log\ma{Y}]\exp[(1-t)\log\ma{X}]\exp[\tfrac{t}{2}\log\ma{Y}]=\ma{Z}(t),\\
\mat{Z}(t)\ma{Z}(t)={}&\exp[-\tfrac{t}{2}\log\ma{Y}]\exp[-(1-t)\log\ma{X}]\exp[-\tfrac{t}{2}\log\ma{Y}]\cdot\\
&\exp[\tfrac{t}{2}\log\ma{Y}]\exp[(1-t)\log\ma{X}]\exp[\tfrac{t}{2}\log\ma{Y}]=\ma{I},\\
\det\ma{Z}(t)={}&\e^{\trace t\log \ma{Y}}\e^{\trace(1-t)\log\ma{X}}=1,
\end{align*}
for all $t\in[0,1]$. Since $\ma{Z}\in\SO$ is symmetric, it follows that $\sigma(\ma{Z})\subset\{-1,1\}$. The trace function is continuous but only assumes integer values on the set of symmetric rotation matrices, \ie $\trace\ma[i]{Z}(t)=2i$ for all $t\in[0,1]$ implying that $\ma{Z}:[0,1]\rightarrow\mathcal{M}_i$. Moreover, $[\ma{P},\ma{Z}]=\ma{0}$ by \citep{higham2008functions} wherefore $\ma{Z}:[0,1]\rightarrow\mathcal{M}_i\cap\mathcal{P}$, thereby establishing that $\mathcal{M}_i\cap\mathcal{P}$ is path connected.

Let $\Omega$ denote the $\omega$-limit set of a solution $\ma{R}$ of \eqref{eq:closed2014} and suppose $\Omega\cap\mathcal{M}_i\neq\emptyset$ for some $i\in\mathcal{E}$. Recall that $V=\trace(\ma{I}-\ma{R})$ decreases in time, as is clear by \eqref{eq:Vd} and that $V|_{\mathcal{M}_j}=2j$ for any $j\in\mathcal{E}$. Since $\ma{R}$ is separated by $V$ from $\mathcal{M}_j$ and $\Omega\cap\mathcal{M}_i\neq\emptyset$, there exists some finite time at which $\ma{R}$ is close enough to $V$ that it cannot come arbitrarily close to $\mathcal{M}_j$ for any $j\in\mathcal{E}$ such that $j>i$ at a later time without violating the decreasingness of $V$, \ie $\Omega\cap\mathcal{M}_j=\emptyset$. Likewise,  $\Omega\cap\mathcal{M}_k=\emptyset$ for all $k\in\mathcal{E}$ such that $k<i$ or else $\Omega\cap\mathcal{M}_i=\emptyset$ by repetition of the same reasoning with $i$ replaced by  $k$. It follows that $\Omega\subset\mathcal{M}_i$ or $\Omega=\{\ma{I}\}$ by Proposition \ref{th:symmetric} and \ref{prop:cool}.\end{proof}

LaSalle's invariance principle is used in Proposition \ref{th:symmetric} to establish convergence to a set $\mathcal{M}$ of equilibria. It remains to determine the region of attraction $\mathcal{R}$ of the identity matrix $\ma{I}\in\mathcal{M}$. Since $V=\trace(\ma{I}-\ma{R})$ decreases in time by \eqref{eq:Vd} and achieves its minimum at $\ma{I}$, it is clear that   $\mathcal{S}=\{\ma{R}\in\SO\,|\,V(\ma{R})<V|_{\mathcal{M}_2}=4\}\subset\mathcal{R}$ due to $\mathcal{S}\cap\mathcal{M}=\{\ma{I}\}$. On $\SOT$, $\mathcal{M}=\mathcal{M}_2\cup\{\ma{I}\}$ wherefore $\mathcal{R}=\mathcal{S}=\SOT\backslash\mathcal{M}_2$. The trace function achieves its global minimum over $\SOT$ on $\mathcal{M}_2$. The general case of $\SO$ is less straightforward since $\mathcal{M}$ contains many saddle points of the trace function, as is illustrated by Example \ref{ex:so4}.



\begin{example}\label{ex:so4} Any $\ma{R}\in\SOT$ with $-1\in\sigma(\ma{R})$ is a global minimizer of the trace function, a fact that can be used for stability analysis. By contrast, consider a sequence $\{\ma[i]{R}\}_{i=1}^\infty\subset\mathsf{SO}(4)$ where 
	\begin{align*}	
	\sigma(\ma[i]{R})=\{\exp(i\vartheta),\exp(-i\vartheta),\exp(i\varphi),\exp(-i\varphi)\}.
	\end{align*}
	The sequence of spectra $\{\sigma(\ma[i]{R})\}_{i=1}^\infty$ obtained by setting $\vartheta_i=\frac{1}{n}$, $\varphi_i=\pi-\frac{1}{n+1}$ converges to $\{1,-1\}$ as $n$ goes to infinity with 
	\begin{align*}
	\trace(\ma[i]{R})=2(\cos\tfrac{1}{n}-\cos\tfrac{1}{n+1}),
	\end{align*}
	which approaches zero from below. It follows that $\lim_{i\rightarrow\infty}\ma[i]{R}$ is not even a local minimizer of the trace function.
\end{example}

\subsection{The Indirect Method of Lyapunov}

\noindent A first step towards characterizing the global stability properties of the closed-loop system \eqref{eq:closed2014} is to study local stability by linearizing the dynamics on $\mathcal{M}\subset\SO$. The indirect method of Lyapunov can then be used to determine stability and instability.

\begin{proposition}\label{prop:linearization}
The linearization on $\SO$ of the closed-loop system given by \eqref{eq:closed2014} at an equilibrium $\ma{R}\in\mathcal{M}$  is
\begin{align}
\md{X}=-\ma{XPR}-\ma{RPX}+k\,\ma{RQ}(\mat{X}-\ma{X})\ma{Q},\label{eq:Xd}
\end{align}
where $\ma{X}=\ma{S}\ma{R}$ for some $\ma{S}\in\so$.
\end{proposition}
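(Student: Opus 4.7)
\medskip

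\noindent\textbf{Proof plan.}
The plan is to linearize the vector field $F(\ma{R})=\ma{P}-\ma{R}\ma{P}\ma{R}+k\,\ma{R}\ma{Q}(\mat{R}-\ma{R})\ma{Q}$ from \eqref{eq:closed2014} at an equilibrium $\ma{R}\in\mathcal{M}$, and then use the fact that every $\ma{R}\in\mathcal{M}$ satisfies $\mat{R}=\ma{R}$ (Proposition \ref{th:symmetric}) to discard one of the terms that arises from the product rule. First I would parametrize a tangent vector to $\SO$ at $\ma{R}$ using the left-trivialization $T_{\ma{R}}\SO=\{\ma{S}\ma{R}\,|\,\ma{S}\in\so\}$ and perturb $\ma{R}$ by writing $\ma{R}(\varepsilon)=\exp(\varepsilon\ma{S})\ma{R}$, so that $\ma{R}(\varepsilon)=\ma{R}+\varepsilon\ma{X}+\Ordo(\varepsilon^2)$ with $\ma{X}=\ma{S}\ma{R}$. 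The linearization is then $DF(\ma{R})[\ma{X}]=\tfrac{\diff}{\diff\varepsilon}F(\ma{R}(\varepsilon))|_{\varepsilon=0}$.

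Next I would apply the product rule term-by-term. The derivative of $\ma{P}$ is zero, and the derivative of $-\ma{R}\ma{P}\ma{R}$ gives directly $-\ma{X}\ma{P}\ma{R}-\ma{R}\ma{P}\ma{X}$. For the $k$-term I would differentiate
\begin{align*}
k\,\ma{R}(\varepsilon)\ma{Q}(\mat{R}(\varepsilon)-\ma{R}(\varepsilon))\ma{Q}
\end{align*}
using the product rule on the three $\varepsilon$-dependent factors, obtaining
\begin{align*}
k\,\ma{X}\ma{Q}(\mat{R}-\ma{R})\ma{Q}+k\,\ma{R}\ma{Q}(\mat{X}-\ma{X})\ma{Q}.
\end{align*}
Here the symmetry $\mat{R}=\ma{R}$ at every point of $\mathcal{M}$ kills the first contribution, leaving exactly $k\,\ma{R}\ma{Q}(\mat{X}-\ma{X})\ma{Q}$ and yielding \eqref{eq:Xd}.

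A small consistency check worth recording is that \eqref{eq:Xd} genuinely acts on the tangent space: writing $\ma{X}=\ma{S}\ma{R}$ and using $\ma{R}\mtr=\ma{R}$ together with $[\ma{R},\ma{P}]=[\ma{R},\ma{Q}]=\ma{0}$ (also from the definition of $\mathcal{M}$), one can verify that $\dot{\ma{X}}\ma{R}\mtr$ is skew-symmetric, so the linearized flow preserves $T_{\ma{R}}\SO$. I expect this part to be entirely routine; the only mild subtlety is being careful that the factor $\mat{R}-\ma{R}$, although it vanishes at $\ma{R}\in\mathcal{M}$, does \emph{not} vanish to second order, which is precisely why its variation is retained inside the parenthesis $(\mat{X}-\ma{X})$ while its prefactor variation drops out. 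No deep step is needed, so no real obstacle is anticipated; the proof is essentially a careful application of the product rule combined with the equilibrium identity $\mat{R}=\ma{R}$.
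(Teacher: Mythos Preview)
Your proposal is correct and follows essentially the same approach as the paper: perturb the equilibrium via $\ma{R}(\varepsilon)=\exp(\varepsilon\ma{S})\ma{R}$, differentiate the right-hand side of \eqref{eq:closed2014} at $\varepsilon=0$ by the product rule, and then use the equilibrium identity $\mat{R}=\ma{R}$ from Proposition~\ref{th:symmetric} to discard the term $k\,\ma{X}\ma{Q}(\mat{R}-\ma{R})\ma{Q}$. Your additional consistency check that $\md{X}\mat{R}\in\so$ (using $[\ma{R},\ma{P}]=\ma{0}$) is not in the paper's proof but is a nice sanity check.
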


\begin{proof}

Consider a smooth perturbation of a solution $\ma{R}(t)$ given by $\ma{R}(t,\varepsilon,\ma{S}(t))$, where $\varepsilon\in[0,\infty)$ and $\ma{S}:[0,\infty)\rightarrow\so$.  The perturbed solution is required to be a smooth function $\ma{R}(t,\varepsilon,\ma{S})=\exp(\varepsilon\,\ma{S}(t))\ma{R}(t)\in\SO$ that satisfies \eqref{eq:closed2014} with  $\ma{R}(t,0,\ma{S})=\ma{R}(t)$. Then $\ma{X}(t)=\left.\frac{\diff}{\diff \varepsilon}\ma{R}(t,\varepsilon,\ma{S}(t))\right|_{\varepsilon=0}=\ma{S}(t)\ma{R}(t)$ represents the part of the perturbed solution that is linear in $\varepsilon$. The linearizion on $\SO$ at $\ma{R}\in\mathcal{M}$ is given by
\begin{align*}
\md{X}={}&\left.\tfrac{\diff^2}{\diff t\diff \varepsilon}\ma{R}(t,\varepsilon,\ma{S})\right|_{\varepsilon=0}=\left.\tfrac{\diff^2}{\diff \varepsilon\diff t}\ma{R}(t,\varepsilon,\ma{S})\right|_{\varepsilon=0}=\left.\tfrac{\diff}{\diff \varepsilon}\md{R}(t,\varepsilon,\ma{S})\right|_{\varepsilon=0}\nonumber\\
={}&\tfrac{\diff}{\diff\varepsilon}\{\ma{P}-\exp(\varepsilon\ma{S})\ma{R}\ma{P}\exp(\varepsilon\ma{S})\ma{R}+\nonumber\\
&k\,\exp(\varepsilon\ma{S})\ma{R}\ma{Q}[\mat{R}\exp(\varepsilon\mat{S})-\exp(\varepsilon\ma{S})\ma{R}]\ma{Q}\}|_{\varepsilon=0}\nonumber\\
={}&-\ma{S}\ma{R}\ma{P}\ma{R}-\ma{R}\ma{P}\ma{S}\ma{R}+k\,\ma{S}\ma{R}\ma{Q}(\mat{R}-\ma{R})\ma{Q}+\nonumber\\
&k\,\ma{R}\ma{Q}(\mat{R}\mat{S}-\ma{R})\ma{Q}+k\,\ma{R}\ma{Q}(\mat{R}-\ma{S}\ma{R})\ma{Q}\nonumber\\
={}&-\ma{X}\ma{P}\ma{R}-\ma{R}\ma{P}\ma{X}+k\,\ma{R}\ma{Q}(\mat{X}-\ma{X})\ma{Q}.\qedhere
\end{align*}
\end{proof}

Proposition \ref{prop:zeros} is used in Section \ref{sec:conv} to establish that, roughly speaking, $\mathcal{M}_i\cap\mathcal{P}$ is a normally hyperbolic invariant manifold \citep{aulbach1984continuous}. This is a generalization of the notion of a hyperbolic equilibrium point to the case of equilibrium manifolds. Much like in the case of a single hyperbolic equilibrium, the theory of normally hyperbolic invariant manifolds allows us to conclude that the system \eqref{eq:closed2014} is point-wise convergent, \ie that the $\omega$-limit set of each trajectory is a singleton.


\begin{proposition}\label{prop:zeros}
Let 
\begin{align}
\ma{F}=-\ma{XPR}-\ma{RPX}+k\,\ma{RQ}(\mat{X}-\ma{X})\ma{Q} \label{eq:F}
\end{align}
denote the right-hand side of the linearization of \eqref{eq:closed2014} at $\ma{R}\in\mathcal{M}_i$ given by Proposition \ref{prop:linearization}. The only pure imaginary eigenvalue of $\ma{F}$ is zero. The eigenspace of zero is $\mathcal{X}_i\cap\mathcal{P}$, where
\begin{align*}
\mathcal{X}_i=\{\ma{S}\ma{R}\,|\,\ma{S}\in\so,\,\{\ma{S},\ma{R}\}=\ma{0}\},\quad\mathcal{P}=\{\ma{A}\in\R^{n\times n}\,|\,[\ma{P},\ma{A}]=\ma{0}\}.
\end{align*}
\end{proposition}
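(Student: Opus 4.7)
The plan is to diagonalize the linear map $\ma{F}$ in a basis adapted jointly to $\ma{R}$ and $\ma{P}$, so that the spectral analysis reduces to elementary scalar bookkeeping on rectangular blocks. Since $\ma{R} \in \mathcal{M}_i$ is a symmetric involution with eigenvalue $-1$ of multiplicity $i$, and since $[\ma{R},\ma{P}] = 0$, I would pick an orthonormal basis in which $\ma{R} = \diag(\ma{I}_{n-i}, -\ma{I}_i)$ and $\ma{P} = \diag(\ma{P}_+, \ma{P}_-)$ with each $\ma{P}_\pm$ an orthogonal projection on the corresponding $\ma{R}$-eigenspace. Any $\ma{S} \in \so$ then decomposes as $\ma{S} = \left[\begin{smallmatrix}\ma{A} & -\mat{B}\\ \ma{B} & \ma{C}\end{smallmatrix}\right]$ with $\ma{A},\ma{C}$ skew, so that $\ma{X} = \ma{S}\ma{R} = \left[\begin{smallmatrix}\ma{A} & \mat{B}\\ \ma{B} & -\ma{C}\end{smallmatrix}\right]$.

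Using $\mat{X} - \ma{X} = -\{\ma{R},\ma{S}\}$, $\ma{R}\ma{P}\ma{R} = \ma{P}$, and $\ma{R}\ma{Q}\ma{R} = \ma{Q}$, a direct expansion of \eqref{eq:F} shows that $\ma{F}$ splits into three independent linear operators acting on the $\ma{A}$, $\ma{B}$, and $\ma{C}$ blocks separately,
\begin{align*}
L_A(\ma{A}) &= -\{\ma{A},\ma{P}_+\} - 2k\,\ma{Q}_+\ma{A}\,\ma{Q}_+, \\
L_B(\ma{B}) &= \ma{P}_-\ma{B} - \ma{B}\,\ma{P}_+, \\
L_C(\ma{C}) &= \{\ma{C},\ma{P}_-\} + 2k\,\ma{Q}_-\ma{C}\,\ma{Q}_-,
\end{align*}
so that the spectrum of $\ma{F}$ is the union of the spectra of $L_A$, $L_B$, and $L_C$.

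To diagonalize each operator I would further refine the basis through the orthogonal splittings $V_+ = \im\ma{P}_+ \oplus \im\ma{Q}_+$ and $V_- = \im\ma{P}_- \oplus \im\ma{Q}_-$, and write $\ma{A}$, $\ma{C}$, and $\ma{B}$ in their induced $2\times 2$ block forms. A routine computation then shows that $L_A$ acts as the scalar $-2$ on the $\im\ma{P}_+\times\im\ma{P}_+$ block of $\ma{A}$, as $-1$ on the cross block, and as $-2k$ on the $\im\ma{Q}_+\times\im\ma{Q}_+$ block; $L_C$ mirrors this with reversed signs, giving multipliers $+2$, $+1$, and $+2k$; and $L_B$ acts as $0$, $+1$, $-1$, $0$ on its four rectangular blocks. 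All eigenvalues of $\ma{F}$ are therefore real, and since $k>0$ the only way to obtain zero is through the two zero multipliers of $L_B$, so $0$ is the unique pure imaginary eigenvalue.

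Finally, the zero-eigenspace collects those $\ma{X}$ for which $\ma{A} = 0$, $\ma{C} = 0$, and $\ma{B}$ lies in the kernel of $L_B$, i.e.\ is block-diagonal across the $\im\ma{P}$ vs.\ $\im\ma{Q}$ splitting, equivalently $\ma{P}_-\ma{B} = \ma{B}\,\ma{P}_+$. The first two conditions are exactly the anticommutation $\{\ma{S},\ma{R}\} = 0$ defining $\mathcal{X}_i$, and the last is exactly $[\ma{P},\ma{X}] = 0$ defining $\mathcal{P}$, giving $\mathcal{X}_i \cap \mathcal{P}$ as claimed. The main technical obstacle is the blockwise expansion of $\ma{F}$ verifying the decoupling into $L_A$, $L_B$, $L_C$; once this reduction is carried out, the remaining eigenvalue analysis amounts to reading off scalar multipliers on rectangular blocks.
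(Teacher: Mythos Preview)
Your argument is correct, and the blockwise decoupling you describe checks out line by line: in the simultaneous eigenbasis of $\ma{R}$ and $\ma{P}$ one indeed finds $\ma{F}(\ma{A},\ma{B},\ma{C})=(L_A\ma{A},L_B\ma{B},L_C\ma{C})$ with the scalar multipliers you list, so every eigenvalue is real and the kernel is exactly $\mathcal{X}_i\cap\mathcal{P}$.

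The paper proceeds quite differently. Rather than diagonalizing $\ma{F}$, it argues by contradiction: it posits an eigenpair $(ib,\ma{S}\ma{R})$ with $b\neq 0$, rewrites the eigenvalue equation as $ib\,\ma{S}=-\ma{SRP}-\ma{PRS}-k\,\ma{Q}(\ma{SR}+\ma{RS})\ma{Q}$, and then successively sandwiches by $\ma{P}$ and $\ma{Q}$ to force $\ma{PSP}=\ma{QSQ}=\ma{QSP}=\ma{PSQ}=\ma{0}$, hence $\ma{S}=\ma{0}$. The step killing $\ma{QSP}$ even appeals to a square root $\ma{R}^{1/2}$. Only afterwards, in Proposition~\ref{prop:indirect}, does the paper construct the remaining eigenvectors by taking $\ma{S}=\ve[k]{v}\otimes\ve[l]{v}-\ve[l]{v}\otimes\ve[k]{v}$ for joint eigenvectors of $\ma{R}$ and $\ma{P}$, which is essentially your block picture expressed one rank-two matrix at a time. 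In effect your approach merges Propositions~\ref{prop:zeros} and~\ref{prop:indirect} into a single computation: by fully diagonalizing $\ma{F}$ you obtain the complete spectrum $\{-2,-1,-2k,0,\pm 1,2,1,2k\}$ at once, from which both the absence of nonzero imaginary eigenvalues and the instability count follow immediately. The paper's indirect route avoids fixing a basis but yields less information per step; your route is more explicit and economical.
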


\begin{proof} The eigenpairs of the linearization are $(\lambda,\ma{X})\in\C\times\C^{n\times n}$ that satisfy $\lambda\ma{X}=\ma{F}(\ma{X})$, $\ma{X}=\ma{S}\ma{R}$. Consider the case of a purely imaginary eigenvalue, \ie $\lambda=ib$ for some $b\in\R$. Then
\begin{align*}
ib\ma{SR}=-\ma{SR}\ma{P}\ma{R}-\ma{R}\ma{P}\ma{SR}-k\,\ma{R}\ma{Q}(\ma{RS}+\ma{SR})\ma{Q}
\end{align*}
or
\begin{align}
ib\ma{S}=-\ma{SR}\ma{P}-\ma{P}\ma{R}\ma{S}-k\,\ma{Q}(\ma{SR}+\ma{RS})\ma{Q}\label{eq:ibS}
\end{align}
since $[\ma{P},\ma{R}]=\ma{0}$ which is equivalent to  $[\ma{Q},\ma{R}]=\ma{0}$. 

This implies $ib\ma{PSP}=-(\ma{PSPR}+\ma{RPSP})$, $ib\ma{QSQ}=-k(\ma{QSQR}+\ma{RQSQ})$. Denote $\ma{Y}=\ma{PSP}$, assume $b\neq0$, and substitute
\begin{align*}
\ma{Y}=\tfrac{i}{b}(\ma{Y}\ma{R}+\ma{R}\ma{Y})
\end{align*}
into itself to obtain $\ma{Y}=-\tfrac{2}{b^2}(\ma{Y}+\ma{RYR})$. But then $(1+\tfrac{2}{b^2})\|\ma{Y}\|_F=\tfrac{2}{b^2}\|\ma{Y}\|_F$, implying that $\ma{Y}=\ma{PSP}=\ma{0}$. Likewise, it can be shown that $\ma{QSQ}=\ma{0}$. 

Multiply by $\ma{Q}$ from the left and $\ma{P}$ from the right to find that $ib\ma{QSP}=-\ma{QSPR}$. Note that $\ma{R}$ satisfies the requirements for the existence of a square root $\ma{R}^{\frac12}$ \citep{higham2008functions}. Since $\ma{R}^2=\ma{I}$, it holds that $\ma{R}^{\frac32}=\ma{R}^{\frac12}$, which implies $-ib\ma{QSPR}^{\frac12}=\ma{QSPR}^{\frac12}$. Since $b\in\R$, $\ma{QSP}=\ma{0}$, \ie $\ma{SP}=\ma{PSP}$. By analogous reasoning we find $\ma{PSQ}=\ma{0}$, \ie $\ma{PS}=\ma{PSP}$ whereby $[\ma{P},\ma{S}]=\ma{0}$. 

It follows that $\ma{S}=\ma{PSP}+\ma{PSQ}+\ma{QSP}+\ma{QSQ}=\ma{0}$, contradicting that $(ib,\ma{X})$ is an eigenpair of $\ma{F}$. It follows that $b=0$. From 
\begin{align*}
\ma{SR}\ma{P}+\ma{P}\ma{R}\ma{S}+k\,\ma{Q}(\ma{SR}+\ma{RS})\ma{Q}=\ma{0},
\end{align*}
we find $\ma{P}(\ma{SR}+\ma{RS})\ma{P}=\ma{0}$, $\ma{Q}(\ma{SR}+\ma{RS})\ma{Q}=\ma{0}$, $\ma{QSRP}=\ma{0}$, $\ma{PRSQ}=\ma{0}$. The two last equalities also yield $\ma{PSRQ}=\ma{0}$, $\ma{QRSP}=\ma{0}$ by $[\ma{P},\ma{R}]=\ma{0}$, $[\ma{P},\ma{S}]=\ma{0}$. Altogether, $\ma{SR}+\ma{RS}=\ma{0}$ or $\{\ma{S},\ma{R}\}=\ma{0}$.\end{proof}


\begin{proposition}\label{prop:indirect}
The linearized system given by Proposition \ref{prop:linearization} is exponentially unstable at all $\ma{R}\in\mathcal{M}_i\backslash\{\ma{I}\}$ for all $i\in\mathcal{E}$. The number of eigenvalues of $\ma{F}$ with nonzero real part is 
\begin{align*}
\dim\mathrm{im}\,\ma{F}=\dim\SO-\dim\ker\,\ma{F}.
\end{align*}
The linearization is exponentially stable at $\ma{R}=\ma{I}$.
\end{proposition}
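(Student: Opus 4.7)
The plan is to exploit the fact that at any equilibrium $\ma{R}\in\mathcal{M}$, both $\ma{R}$ and $\ma{P}$ are symmetric and commute (Proposition \ref{th:symmetric}), hence are simultaneously orthogonally diagonalizable. I would fix an orthonormal eigenbasis in which $\ma{R}=\diag(\epsilon_1,\ldots,\epsilon_n)$ with $\epsilon_j\in\{-1,1\}$ and $\ma{P}=\diag(\pi_1,\ldots,\pi_n)$ with $\pi_j\in\{0,1\}$. Parameterizing $\ts[\SO]{\ma{R}}$ by $\ma{X}=\ma{S}\ma{R}$ with $\ma{S}\in\so$, a direct entry-wise calculation using $\ma{R}^2=\ma{I}$ and $[\ma{R},\ma{P}]=\ma{0}$ reduces the linearization \eqref{eq:Xd} to a decoupled family of scalar ODEs
\begin{equation*}
\dot{s}_{jk}=\lambda_{jk}s_{jk},\qquad\lambda_{jk}=-(\epsilon_j\pi_j+\epsilon_k\pi_k)-k(1-\pi_j)(1-\pi_k)(\epsilon_j+\epsilon_k),
\end{equation*}
indexed by unordered pairs $\{j,k\}$ with $j\neq k$. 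This exhibits $\ma{F}$ as diagonalizable with $\tfrac12 n(n-1)=\dim\SO$ real eigenvalues.

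Both stability assertions then reduce to inspecting the signs of the $\lambda_{jk}$. At $\ma{R}=\ma{I}$ all $\epsilon_j=1$, giving $\lambda_{jk}\in\{-2,-1,-2k\}$, all strictly negative, so the linearization is exponentially stable. At any other $\ma{R}\in\mathcal{M}_i$ there are $i\geq 2$ indices with $\epsilon_j=-1$, and $i$ is necessarily even since $\det\ma{R}=1$. I would perform a short case split on how the set $J=\{j\,|\,\epsilon_j=-1\}$ distributes across the eigenspaces of $\ma{P}$: if $|J\cap\mathrm{im}\,\ma{P}|\geq 2$ then a pair inside $\mathrm{im}\,\ma{P}$ yields $\lambda_{jk}=2$; if $|J\cap\ker\ma{P}|\geq 2$ then a pair inside $\ker\ma{P}$ yields $\lambda_{jk}=2k$; otherwise the parity of $i$ forces $|J\cap\mathrm{im}\,\ma{P}|=|J\cap\ker\ma{P}|=1$, and the mixed pair yields $\lambda_{jk}=1$. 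In each case $\ma{F}$ has a positive real eigenvalue, so the linearization is exponentially unstable.

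The eigenvalue count follows at once: Proposition \ref{prop:zeros} rules out nonzero purely imaginary eigenvalues, and the diagonality of $\ma{F}$ in the basis above equates the algebraic and geometric multiplicities of its zero eigenvalue, so the rank-nullity theorem gives $\dim\mathrm{im}\,\ma{F}=\dim\SO-\dim\ker\,\ma{F}$ as the number of eigenvalues with nonzero real part. The main obstacle is the case analysis for instability: one must verify that the parity constraint on the $-1$ eigenvalues of $\ma{R}$ always produces at least one positive $\lambda_{jk}$, the subtle point being that the apparently problematic configuration of a single $-1$ eigenvalue in $\mathrm{im}\,\ma{P}$ together with none in $\ker\ma{P}$ (or vice versa) is precisely what $\det\ma{R}=1$ forbids.
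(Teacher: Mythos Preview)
Your proposal is correct and follows essentially the same approach as the paper: simultaneously diagonalize $\ma{R}$ and $\ma{P}$, use the elementary skew matrices $\ve{e}_j\otimes\ve{e}_k-\ve{e}_k\otimes\ve{e}_j$ as eigenvectors of $\ma{F}$, and read off the eigenvalues by a case split on how the $-1$ eigenvectors of $\ma{R}$ are distributed between $\mathrm{im}\,\ma{P}$ and $\ker\ma{P}$. Your unified formula for $\lambda_{jk}$ and the direct rank--nullity argument are a slightly more compact packaging of what the paper does via three separate eigenvalue computations and the combinatorial identity \eqref{eq:combinatorial}, but the substance is the same; note also that your diagonalization already shows every eigenvalue of $\ma{F}$ is real, so the appeal to Proposition~\ref{prop:zeros} is in fact superfluous.
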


\begin{proof} Since $[\ma{P},\ma{R}]=\ma{0}$, $\ma{P}$ and $\ma{R}$ are simultaneously diagonalizable, \ie they share an orthonormal basis of eigenvectors. Suppose that there are two linearly independent eigenpairs, $(-1,\ve{v})$ and $(-1,\ve{u})$ of $\ma{R}$ such that $(1,\ve{v})$ and $(1,\ve{u})$ are eigenpairs of either $\ma{P}$ or $\ma{Q}$. Set $\ma{S}=\ve{u}\otimes\ve{v}-\ve{v}\otimes\ve{u}$, then either
\begin{align*}
\ma{F}&=-\ma{SR}\ma{P}-\ma{P}\ma{R}\ma{S}-k\,\ma{Q}(\ma{SR}+\ma{RS})\ma{Q}=2\ma{S},
\end{align*}
or
\begin{align*}
\ma{F}&=-\ma{SR}\ma{P}-\ma{P}\ma{R}\ma{S}-k\,\ma{Q}(\ma{SR}+\ma{RS})\ma{Q}=2k\ma{S},
\end{align*}
\ie either $(2,\ma{S}\ma{R})$ is an eigenpair of $\ma{F}$ or $(2k,\ma{S}\ma{R})$ is.

Suppose the above construction is impossible. Since the eigenvalue multiplicity $m(-1)=i$ is even, it must be the case that $m(-1)=2$ and $(-1,\ve{u})$, $(-1,\ve{v})$ are eigenpairs of $\ma{R}$ such that $(1,\ve{u})$ is an eigenpair of $\ma{P}$ and $(1,\ve{v})$ is an eigenpair of $\ma{Q}$. Set $\ma{S}=\ve{u}\otimes\ve{v}-\ve{v}\otimes\ve{u}\in\so$ whereby
\begin{align*}
\ma{F}&=-\ma{SR}\ma{P}-\ma{P}\ma{R}\ma{S}-k\,\ma{Q}(\ma{SR}+\ma{RS})\ma{Q}\\
&=-(\ve{u}\otimes\ve{v}-\ve{v}\otimes\ve{u})\ma{RP}-\ma{PR}(\ve{u}\otimes\ve{v}-\ve{v}\otimes\ve{u})\\
&=-\ve{v}\otimes\ve{u}+\ve{u}\otimes\ve{v}=\ma{S},
\end{align*}
\ie $(1,\ma{SR})$ is an eigenpair of $\ma{F}$.

Consider the construction of eigenvectors of $\ma{F}$ for $\ma{R}\in\mathcal{M}_i$. Let there be $m$ eigenpairs $(\lambda_j,\ve[j]{v})$ of $\ma{R}$ such that $(1,\ve[j]{v})$ is an eigenpair of $\ma{P}$, $j\in\{1,\ldots,m\}$ and $i-m$ eigenpairs such that $(1,\ve[j]{v})$, $j\in\{m+1,\ldots,i\}$, is an eigenpair of $\ma{Q}$. Set $\ma{S}=\ve[k]{v}\otimes\ve[l]{v}-\ve[l]{v}\otimes\ve[k]{v}$ for some $l,k\in\{1,\ldots,i\}$. Then 
\begin{align*}
\ma{F}={}&-\ma{SR}\ma{P}-\ma{P}\ma{R}\ma{S}-k\,\ma{Q}(\ma{SR}+\ma{RS})\ma{Q}=\\
={}&-\lambda_l\ve[k]{v}\otimes\ve[l]{v}\ma{P}+\lambda_k\ve[l]{v}\otimes\ve[k]{v}\ma{P}-\lambda_k\ma{P}\ve[k]{v}\otimes\ve[l]{v}+\lambda_l\ma{P}\ve[l]{v}\otimes\ve[k]{v}+\\
&-k\,\ma{Q}(\lambda_l\ve[k]{v}\otimes\ve[l]{v}-\lambda_k\ve[l]{v}\otimes\ve[k]{v}+\lambda_k\ve[k]{v}\otimes\ve[l]{v}-\lambda_l\ve[l]{v}\otimes\ve[k]{v})\ma{Q}\\
={}&\lambda_l(\ma{P}\ve[l]{v}\otimes\ve[k]{v}-\ve[k]{v}\otimes\ve[l]{v}\ma{P})+\lambda_k(\ve[l]{v}\otimes\ve[k]{v}\ma{P}-\ma{P}\ve[k]{v}\otimes\ve[l]{v})+\\
&-k\,\ma{Q}[\lambda_l(\ve[k]{v}\otimes\ve[l]{v}-\ve[l]{v}\otimes\ve[k]{v})+\lambda_k(\ve[k]{v}\otimes\ve[l]{v}-\ve[l]{v}\otimes\ve[k]{v})]\ma{Q}.
\end{align*}

There are three cases to consider: either $l,k\in\{1,\ldots,m\}$ whereby $\ma{F}=-(\lambda_l+\lambda_k)\ma{S}$, $l\in\{1,\ldots,m\}$ and $k\in\{m+1,\ldots,n\}$  whereby $\ma{F}=-\lambda_l\ma{S}$, or $l,k\in\{m+1,\ldots,n\}$ whereby $\ma{F}=-k(\lambda_l+\lambda_k)\ma{S}$. Assume there are $j\leq i$ indices $l\in\{1,\ldots,m\}$ such $\lambda_l=-1$. Then there are $i-j$ indices  $l\in\{m+1,\ldots,n\}$ such that $\lambda_l=-1$. Let $r_i$ denote the number of eigenvectors of $\ma{F}$ at $\ma{R}\in\mathcal{M}_i\cap\mathcal{P}$ with nonzero eigenvalue. Then
\begin{align*}
r_i=\binom{j}{2}+\binom{m-j}{2}+\binom{m}{1}\binom{n-m}{1}+\binom{i-j}{2}+\binom{n-m-(i-j)}{2}.
\end{align*}

The combinatorial identity
\begin{align}
\binom{n}{2}=\binom{k}{2}+\binom{k}{1}\binom{n-k}{1}+\binom{n-k}{2},\label{eq:combinatorial}
\end{align}
for any $n\in\N$ and $k\in\{1,\ldots,n\}$, is obtained by noting that to chose two numbers in $\{1,\ldots,n\}$ can be done by either choosing two in $\{1,\ldots,k\}$, one in $\{1,\ldots,k\}$ and one in $\{k+1,\ldots,n\}$, or two in $\{k+1,\ldots,n\}$. Repeated application of \eqref{eq:combinatorial} in the expression for $r_i$ yields
\begin{align*}
r_i&=\binom{n}{2}-\binom{j}{1}\binom{m-j}{1}-\binom{i-j}{1}\binom{n-m}{1}\\ &=\dim\ts[\SO]{\ma{R}}-\dim\ker\ma{F}=\dim\mathrm{im}\,\ma{F}.                   
\end{align*}

Consider the case of $\ma{R}=\ma{I}$. Let $p=\rank\ma{P}$. By reasoning as above, there are $\binom{p}{2}$ linearly independent eigenpairs of $\ma{F}$ on of the form $(-2,\ma{S})$, $\binom{p}{1}\binom{n-p}{1}$ on the form $(-1,\ma{S})$, and $\binom{n-p}{2}$ on the form $(-2k,\ma{S})$. In total, there are
\begin{align*}
\binom{p}{2}+\binom{p}{1}\binom{n-p}{1}+\binom{n-p}{2}&=\binom{n}{2}=\dim\so
\end{align*}
linearly independent eigenvectors with negative eigenvalues. 
It follows that the identity matrix is an exponentially stable equilibrium of the closed-loop dynamics \eqref{eq:closed2014}. \end{proof}

\subsection{Point-Wise Convergence}\label{sec:conv}

\noindent It remains to determine if each solution $\ma{R}$ to \eqref{eq:closed2014} converges to a single equilibrium within its $\omega$-limit set $\Omega\subset\mathcal{M}$ or if the asymptotic behavior of the closed-loop system is more complex than that. This property, so-called point-wise convergence \citep{lageman2007convergence}, allows us to draw conclusions regarding the region of attraction of exponentially unstable equilbria \citep{freeman2013global}. 

\begin{proposition}[B. Aulbach \citep{aulbach1984continuous}]\label{prop:aulbach}
Consider an autonomous system 
\begin{align}
\vd{x}=\ma{f}(\ve{x}),\label{eq:xd}
\end{align}
where $\ve{f}\in\mathcal{C}^3(\R^n,\R^n)$. Suppose \eqref{eq:xd} has a differentiable manifold $\mathcal{M}$ of equilibrium points. Let $\ma{x}(t)$ be any solution of \eqref{eq:xd} with $\omega$-limit set $\Omega$. Suppose  that 
\begin{itemize}
\item[(i)] there exists a point $\ve{y}\in\Omega$, \ie $\Omega$ is nonempty,
\item[(ii)] there exists a neighborhood $\mathcal{B}$ of $\ve{y}$ such that $\Omega\cap \mathcal{B}\subset \mathcal{M}$,
\item[(iii)] $n-\dim\mathcal{M}$ eigenvalues of the Jacobian $\ma{J}(\ve{y})$ of $\ve{f}$ evaluated at $\ve{y}$ have nonzero real parts.
\end{itemize}
Then $\lim_{t\rightarrow\infty}\ve{x}(t)=\ve{y}$, \ie $\Omega=\{\ve{y}\}$.
\end{proposition}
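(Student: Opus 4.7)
The plan is to prove Aulbach's result by combining a local normal form near $\ve{y}$ with the invariant manifold theorem for normally hyperbolic equilibrium manifolds. After translating $\ve{y}$ to the origin, I would apply the inverse function theorem to produce local coordinates $(\ve{u},\ve{v})\in\R^{n-m}\times\R^m$, where $m=\dim\mathcal{M}$, so that $\mathcal{M}$ becomes the flat slice $\{\ve{u}=\zerovector\}$ in a neighborhood of the origin. Because every point of $\mathcal{M}$ is an equilibrium, the vector field acquires the block form
\begin{equation*}
\vd{u}=\ma{A}\ve{u}+\ve{F}(\ve{u},\ve{v}),\qquad \vd{v}=\ve{G}(\ve{u},\ve{v}),
\end{equation*}
with $\ve{F}(\zerovector,\ve{v})=\ve{G}(\zerovector,\ve{v})=\zerovector$ on a neighborhood of the origin and $\ve{F},\ve{G}=o(\|\ve{u}\|)$; hypothesis (iii) then says that $\ma{A}\in\R^{(n-m)\times(n-m)}$ is hyperbolic.

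With this normal form in hand, I would invoke the Fenichel / Hirsch--Pugh--Shub theorem on normally hyperbolic invariant manifolds: a neighborhood of the origin admits a local stable manifold $W^s(\mathcal{M})$ and a local unstable manifold $W^u(\mathcal{M})$, each foliated by invariant $\mathcal{C}^1$ fibers $W^s(\ve{p})$ and $W^u(\ve{p})$ anchored at the equilibria $\ve{p}\in\mathcal{M}$. Trajectories in $W^s(\ve{p})$ decay to $\ve{p}$ exponentially in forward time, and symmetrically for $W^u(\ve{p})$ in backward time. Every point in a sufficiently small neighborhood of $\ve{y}$ then decomposes uniquely into a center component on $\mathcal{M}$, a stable-fiber offset, and an unstable-fiber offset.

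The heart of the argument is to use hypothesis (ii) to rule out the unstable direction. Fix a sequence $t_k\to\infty$ with $\ve{x}(t_k)\to\ve{y}$, which exists by (i), and suppose for contradiction that the unstable-fiber offset of $\ve{x}(t_k)$ stays bounded away from zero along a subsequence. By exponential expansion along $W^u$, I can then choose additional times $\tau_k>0$ so that $\ve{x}(t_k+\tau_k)$ lies at a uniform positive distance from $\mathcal{M}$ but still inside a compact neighborhood of $\ve{y}$; passing to a further subsequence yields an accumulation point in $\Omega\cap\mathcal{B}$ lying off $\mathcal{M}$, contradicting (ii). Hence the unstable component of $\ve{x}(t_k)$ vanishes for all sufficiently large $k$, which by invariance of the fibration forces $\ve{x}(t)$ eventually to lie on $W^s(\ve{p}_\star)$ for some $\ve{p}_\star\in\mathcal{M}$ close to $\ve{y}$, and hence to converge exponentially to $\ve{p}_\star$. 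Since $\ve{x}(t_k)\to\ve{y}$ along the original subsequence, $\ve{p}_\star=\ve{y}$, and therefore $\Omega=\{\ve{y}\}$.

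The main obstacle I anticipate is the rigorous exclusion of the unstable component at the times $t_k$: the point $\ve{x}(t_k)$ is only close to $W^s(\mathcal{M})$ rather than on it, so one must carefully quantify how a small unstable offset amplifies under the flow without leaving $\mathcal{B}$, and then extract a limit point that contradicts (ii). An alternative route would be to construct a local quadratic Lyapunov-like form that is positive-definite in the hyperbolic directions, constant along $\mathcal{M}$, and decreasing along orbits whenever the unstable coordinate is nonzero, and to use the recurrence $\ve{x}(t_k)\to\ve{y}$ to squeeze the hyperbolic coordinates to zero.
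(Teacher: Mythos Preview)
The paper does not supply a proof of this proposition. It is quoted verbatim from Aulbach's monograph and used as a black box in the subsequent argument (Proposition~\ref{prop:conv}), so there is no in-paper proof to compare your proposal against.

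Your sketch is nonetheless in the spirit of Aulbach's original argument: flatten $\mathcal{M}$ locally, exploit that the center directions are trivial (every point of $\mathcal{M}$ is an equilibrium, so the only dynamics transverse to $\mathcal{M}$ are hyperbolic), and then use recurrence to $\ve{y}$ together with hypothesis~(ii) to kill the unstable component. One point to tighten: from the contradiction step you conclude that the unstable offset of $\ve{x}(t_k)$ tends to zero, but you then assert it \emph{vanishes} for large $k$ and invoke invariance of the stable fibers. That jump needs justification. A nonzero (but small) unstable offset at $t_k$ would still grow under the flow; the way out is to observe that, because the center dynamics are trivial, the trajectory can only separate from $\mathcal{M}$ in the unstable direction, so one can choose $\tau_k$ with $\ve{x}(t_k+\tau_k)$ at a fixed distance $\varepsilon$ from $\mathcal{M}$ while the stable and center coordinates remain small---hence still inside $\mathcal{B}$---and extract a limit in $\Omega\cap\mathcal{B}\setminus\mathcal{M}$. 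Once you know the unstable offset is \emph{exactly} zero at some $t_k$, forward invariance of the stable foliation finishes the proof as you describe.
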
 

\begin{proposition}\label{prop:conv}
Any solution $\ma{R}$ of \eqref{eq:closed2014} with initial condition on $\SO$ converges to an equilibrium point.
\end{proposition}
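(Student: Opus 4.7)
The plan is to verify the three hypotheses of Aulbach's theorem (Proposition~\ref{prop:aulbach}) separately on each piece of the equilibrium set, taking the manifold $\mathcal{M}$ appearing in Aulbach's formulation to be the path-connected component $\mathcal{M}_i\cap\mathcal{P}$ (or $\{\ma{I}\}$) identified in Proposition~\ref{prop:cool}. Since $\SO$ is compact, every solution $\ma{R}(t)$ has a nonempty $\omega$-limit set $\Omega$, which takes care of hypothesis (i). By Proposition~\ref{prop:cool} either $\Omega=\{\ma{I}\}$, in which case convergence to the identity is immediate, or $\Omega\subset\mathcal{M}_i\cap\mathcal{P}$ for some $i\in\mathcal{E}(n)$; pick any $\ve{y}\in\Omega$ in the latter case.

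For hypothesis (ii) I would use the fact that the components $\mathcal{M}_j\cap\mathcal{P}$ are separated by the continuous function $V=\trace(\ma{I}-\ma{R})$, which equals $2j$ on $\mathcal{M}_j$. Hence one can choose a neighborhood $\mathcal{B}$ of $\ve{y}$ small enough that $\mathcal{B}\cap\mathcal{M}\subset\mathcal{M}_i\cap\mathcal{P}$, so locally $\Omega\cap\mathcal{B}$ lies inside the single smooth component.

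The real work is in hypothesis (iii): I must show that the linearization $\ma{F}$ from Proposition~\ref{prop:zeros} has exactly $\dim\SO-\dim(\mathcal{M}_i\cap\mathcal{P})$ eigenvalues with nonzero real part. By Proposition~\ref{prop:indirect} the count of such eigenvalues is $\dim\SO-\dim\ker\ma{F}$, so the whole burden reduces to the dimension identity
\begin{align*}
\dim\ker\ma{F}=\dim T_{\ma{R}}(\mathcal{M}_i\cap\mathcal{P}).
\end{align*}
By Proposition~\ref{prop:zeros}, $\ker\ma{F}=\mathcal{X}_i\cap\mathcal{P}$ with $\mathcal{X}_i=\{\ma{S}\ma{R}\,:\,\ma{S}\in\so,\{\ma{S},\ma{R}\}=\ma{0}\}$. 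To identify this with the tangent space I would differentiate the defining conditions of $\mathcal{M}_i\cap\mathcal{P}$ along a curve $t\mapsto\exp(t\ma{S})\ma{R}$: the symmetry condition $\mat{R}=\ma{R}$ linearizes to $\{\ma{S},\ma{R}\}=\ma{0}$, the commutation condition $[\ma{P},\ma{R}]=\ma{0}$ linearizes to $[\ma{P},\ma{S}\ma{R}]=\ma{0}$, i.e.\ $\ma{S}\ma{R}\in\mathcal{P}$, and the eigenvalue multiplicity constraint is automatic since $\trace$ is integer-valued on symmetric elements of $\SO$. These conditions together cut out precisely $\mathcal{X}_i\cap\mathcal{P}$, yielding the required dimension match.

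With all three Aulbach hypotheses verified, we conclude $\Omega=\{\ve{y}\}$, so $\lim_{t\to\infty}\ma{R}(t)=\ve{y}$. The main obstacle is the tangent-space identification in the third hypothesis, because it is what links the algebraic kernel computation of Proposition~\ref{prop:zeros} to the geometric dimension of the equilibrium manifold; everything else is either compactness, continuity of $V$, or already packaged in earlier propositions.
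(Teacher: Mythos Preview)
Your proposal is correct and follows essentially the same route as the paper: verify Aulbach's three hypotheses on each component $\mathcal{M}_i\cap\mathcal{P}$, using compactness for (i), Proposition~\ref{prop:cool} for (ii), and the identification $\ker\ma{F}=\mathcal{X}_i\cap\mathcal{P}=\ts[\mathcal{M}_i\cap\mathcal{P}]{\ma{R}}$ for (iii). The paper carries out the tangent-space computation exactly as you outline, linearizing the symmetry constraint to $\{\ma{S},\ma{R}\}=\ma{0}$ and the commutation constraint to $[\ma{P},\ma{S}]=\ma{0}$, and noting that the trace constraint is automatically preserved on symmetric rotations.
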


\begin{proof}
The proof is by verification of property (i)--(iii) of Proposition \ref{prop:aulbach} with respect to the system given by \eqref{eq:closed2014} and the differentiable manifold of equilibria $\mathcal{M}_i\cap\mathcal{P}$ characterized by Proposition \ref{prop:cool}.
	
(i) Since $\SO$ is compact, any trajectory can be sampled as $\{\ma{R}(t_j)\}_{j=0}^\infty$ for some $\{t_j\}_{j=0}^\infty\subset[0,\infty)$ such that $\lim_{j\rightarrow\infty}\ma{R}(t_j)$ exists by the Bolzano-Weierstrass theorem. The limit $\lim_{j\rightarrow\infty}\ma{R}(t_j)$ belongs to the $\omega$-limit set $\Omega$.
	
(ii) The inclusion $\Omega\subset\mathcal{M}_i\cap\mathcal{P}$ holds by Proposition \ref{prop:cool}. 
	
(iii) The number of eigenvalues with nonzero real part is $\dim\mathrm{im}\,\ma{F}$ by Proposition \ref{prop:zeros}. Since $\mathcal{M}_i\cap\mathcal{P}$ is connected it holds that $\dim\mathcal{M}_i\cap\mathcal{P}=\dim\ts[\mathcal{M}_i\cap\mathcal{P}]{\ma{R}}$. Note that  $\trace(\ma{I}-\ma{R})=2i$ for all $\ma{R}\in\mathcal{M}_i$, as required by Proposition \ref{prop:cool}, is implied by $\ts[\mathcal{M}_i]{\ma{R}}\subset\{\ma{A}\in\R^{n\times n}\,|\,\mat{A}=\ma{A}\}$ since the trace function is integer valued over symmetric rotation matrices. Furthermore,
\begin{align}
\ts[\mathcal{M}_i\cap\mathcal{P}]{\ma{R}}&=\ts[\SO]{\ma{R}}\cap\{\ma{A}\in\R^{n\times n}\,|\,[\ma{P},\ma{A}]=\ma{0},\,\mat{A}=\ma{A}\}\nonumber\\
&=\{\ma{X}\in\R^{n\times n}\,|\,\ma{X}=\ma{S}\ma{R},\,\ma{S}\in\so,\,[\ma{P},\ma{X}]=\ma{0},\,\mat{X}=\ma{X}\}\nonumber\\
&=\{\ma{X}\,|\,\ma{X}=\ma{SR},\,\ma{S}\in\so,\,[\ma{P},\ma{S}]=\ma{0},\,\{\ma{R},\ma{S}\}=\ma{0}\}\nonumber\\
&=\mathcal{X}_i\cap\mathcal{P}=\ker\ma{F},\label{eq:ts}
\end{align}
where $\ker\ma{F}=\mathcal{X}_i\cap\mathcal{P}$ is characterized by Proposition \ref{prop:zeros}. 
Recall the result of Proposition \ref{prop:indirect}. 
 The number of eigenvalues with nonzero real part is
\begin{align*}
\dim\SO-\dim\mathrm{ker}\,\ma{F}&=\dim\SO-\dim\mathcal{M}_i\cap\mathcal{P}.\qedhere
\end{align*}
\end{proof}

\subsection{Global Stability Analysis}

\noindent We are now ready to state and prove one of the two main results of this paper, Theorem \ref{th:stability}. For the proof, Proposition \ref{prop:unstable} is required. It gives conditions under which the local stability by the first approximation of all equilibria can be used to infer global stability properties of the entire system. Our work thus far ensures that the conditions of Proposition \ref{prop:unstable} are fulfilled with respect to the undesired equilibria contained in $\mathcal{M}\backslash\{\ma{I}\}$

\begin{proposition}[R.\um{}A. Freeman \citep{freeman2013global}]\label{prop:unstable}
Consider a system of the form
\begin{align*}
\vd{x}=\ve{f}(\ve{x}),
\end{align*}
where $\ve{f}$ is a vector field on an $n$-dimensional, connected,
smooth Riemannian manifold $\mathcal{X}$. Suppose $\mathcal{S}\subset\mathcal{X}$ is a set of  equilibria, that $\ve{f}$ is $\mathcal{C}^1$, that each equilibrium in $\mathcal{S}$ is exponentially unstable, and suppose that
\begin{align}
\mathcal{R}(\mathcal{S})=\cup_{\ve{x}\in\mathcal{S}}\mathcal{R}(\ve{x}),\label{eq:convergent}
\end{align}
where $\mathcal{R}$ maps a set of equilibria to the union of their regions of attraction. Then $\mathcal{R}(\mathcal{S})$ is of measure zero and meager on $\mathcal{X}$.
\end{proposition}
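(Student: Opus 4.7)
The plan is to reduce the proposition to the stable manifold theorem applied pointwise, and then use second countability of $\mathcal{X}$ to turn the union over $\mathcal{S}$ into a countable union of null, nowhere-dense sets.

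First I would fix an arbitrary $\ve{x}\in\mathcal{S}$. Since $\ve{x}$ is exponentially unstable, the linearization of $\ve{f}$ at $\ve{x}$ has at least one eigenvalue with positive real part. In a normal coordinate chart at $\ve{x}$ the stable manifold theorem for $\mathcal{C}^1$ flows then produces a local stable manifold $W^{s}_{\mathrm{loc}}(\ve{x})$ that is a $\mathcal{C}^1$ embedded submanifold of $\mathcal{X}$ of positive codimension; in particular it has dimension strictly less than $n$, so it is both a null set with respect to the Riemannian volume on $\mathcal{X}$ and nowhere dense.

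The next step is globalization. Let $\phi^{t}$ denote the flow of $\ve{f}$. A point $\ve{y}\in\mathcal{R}(\ve{x})$ satisfies $\phi^{t}(\ve{y})\to\ve{x}$, so the trajectory eventually enters $W^{s}_{\mathrm{loc}}(\ve{x})$ and remains there; consequently
\begin{align*}
\mathcal{R}(\ve{x})\subseteq\bigcup_{n\in\N}\phi^{-n}\!\left(W^{s}_{\mathrm{loc}}(\ve{x})\right).
\end{align*}
Because each $\phi^{-n}$ is a diffeomorphism onto its image, every set in this countable union is a $\mathcal{C}^1$ immersed submanifold of positive codimension, and in particular is null and nowhere dense. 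Countable unions preserve both properties, so $\mathcal{R}(\ve{x})$ has both.

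The main obstacle is upgrading from a single $\ve{x}$ to the whole set $\mathcal{S}$, which may well be uncountable. Here I would exploit that a smooth Riemannian manifold is second countable and therefore Lindel\"of: cover $\mathcal{X}$ by a countable family of precompact coordinate charts $\{U_{k}\}$ and focus on $\overline{U_{k}}\cap\mathcal{S}$. The linearization of $\ve{f}$ depends continuously on the equilibrium, so on this compact set one obtains, after shrinking if necessary, a uniform spectral gap separating the unstable spectrum from the imaginary axis. A uniform version of the stable manifold theorem then provides local stable sets $W^{s}_{r_{k}}(\ve{x})$ of a common radius $r_{k}$ and bounded codimension, whose union over $\ve{x}\in\overline{U_{k}}\cap\mathcal{S}$ is the $\mathcal{C}^{1}$ image of a set of positive codimension in $\R^{n}$ and hence still null and nowhere dense. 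Combining with the flow as in the previous step and taking the countable union in $k$ and in $n\in\N$ writes $\mathcal{R}(\mathcal{S})$ as a countable union of null, nowhere-dense sets; measure zero then follows by countable additivity and meagerness by the Baire category theorem. The delicate point throughout is uniformity: without the compactness argument, the union over a possibly uncountable $\mathcal{S}$ could in principle cover an open set, so continuous dependence of the linearization on compact pieces of $\mathcal{S}$ is the load-bearing ingredient.
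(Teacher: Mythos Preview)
The paper does not supply a proof of this proposition: it is quoted from \citep{freeman2013global} and invoked as an external result in the proof of Theorem~\ref{th:stability}. There is therefore no in-paper argument to compare your proposal against.

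On its own merits, your sketch has a genuine gap before the step you flag as delicate. The hypothesis ``exponentially unstable'' guarantees at least one eigenvalue with positive real part but does not exclude eigenvalues on the imaginary axis. Whenever the linearization at $\ve{x}$ has a nontrivial center subspace---which is automatic if $\mathcal{S}$ contains a continuum through $\ve{x}$, but can occur at isolated equilibria as well---a trajectory with $\phi^{t}(\ve{y})\to\ve{x}$ need \emph{not} enter the strict local stable manifold $W^{s}_{\mathrm{loc}}(\ve{x})$; it is only guaranteed to enter the local center-stable manifold. A two-dimensional example: $\dot{x}=-x^{3}$, $\dot{y}=y$ has $W^{s}_{\mathrm{loc}}(0)=\{0\}$ yet $\mathcal{R}(0)$ is the entire $x$-axis. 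Hence the inclusion $\mathcal{R}(\ve{x})\subset\bigcup_{n}\phi^{-n}\bigl(W^{s}_{\mathrm{loc}}(\ve{x})\bigr)$ can fail, and with it the rest of the argument. The repair is to work with local center-stable manifolds throughout: each has codimension equal to the number of strictly unstable eigenvalues (at least one), and a single such chart already absorbs every forward-bounded orbit nearby, including those of neighboring equilibria. A countable cover of $\mathcal{S}$ by center-stable charts, obtained from second countability, then replaces your uniform-stable-manifold construction and sidesteps the uncountable union altogether.
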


In passing we note that it is possible for a set to attract trajectories that do not have a limit, \ie condition \ref{eq:convergent} may fail to hold under unfavorable circumstances. An example of such behavior, where an exponentially unstable set is globally attractive, is provided in \citep{freeman2013global}.

\begin{theorem}\label{th:stability}
The identity matrix is an almost globally asymptotically stable equilibrium of the closed-loop dynamics generated by Algorithm \ref{algo:2014}. The rate of convergence is locally exponential. The set of initial conditions from which convergence to the identity matrix fails is meager in $\SO$.
\end{theorem}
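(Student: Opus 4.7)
The plan is to combine the local spectral information collected in Propositions \ref{prop:linearization}--\ref{prop:conv} with Freeman's result (Proposition \ref{prop:unstable}) to lift pointwise instability of every undesired equilibrium into a global almost-everywhere statement about the basin of $\ma{I}$.

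First I would handle the local behavior at $\ma{I}$. Specializing Proposition \ref{prop:indirect} to $\ma{R} = \ma{I}$, the counting argument there exhibits $\binom{n}{2} = \dim\so$ linearly independent eigenvectors of the linearization $\ma{F}$, all with strictly negative eigenvalues in $\{-2,-1,-2k\}$. The indirect method of Lyapunov therefore yields local exponential stability of $\ma{I}$, giving the rate claim in the theorem.

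Next I would show that the region attracted to the undesired equilibria is small. Set $\mathcal{S} = \mathcal{M}\backslash\{\ma{I}\}$. By Proposition \ref{prop:cool}, $\mathcal{S}$ decomposes into the finite disjoint union $\bigcup_{i\in\mathcal{E}}\mathcal{M}_i\cap\mathcal{P}$ of path-connected smooth manifolds of equilibria; by Proposition \ref{prop:indirect}, each point of $\mathcal{S}$ is exponentially unstable with respect to the first approximation. To invoke Freeman's theorem on $\mathcal{S}$ I must verify its hypothesis \eqref{eq:convergent}, namely that every trajectory attracted to $\mathcal{S}$ actually selects a single limit in $\mathcal{S}$. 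This is exactly what Proposition \ref{prop:conv} delivers, and it is the substantive input: $\mathcal{R}(\mathcal{S}) = \bigcup_{\ma{R}\in\mathcal{S}}\mathcal{R}(\ma{R})$. Proposition \ref{prop:unstable} then concludes that $\mathcal{R}(\mathcal{S})$ is meager and of Lebesgue measure zero in $\SO$.

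Finally I would assemble the pieces. Proposition \ref{prop:conv} guarantees that every initial condition in $\SO$ lies in the basin of some equilibrium, so $\SO = \mathcal{R}(\ma{I})\cup\mathcal{R}(\mathcal{S})$ and hence $\SO\backslash\mathcal{R}(\ma{I}) \subset \mathcal{R}(\mathcal{S})$ inherits meagerness and measure zero. Combined with the local exponential stability of $\ma{I}$, this is precisely almost global asymptotic stability.

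The main obstacle I expect is the verification of \eqref{eq:convergent}: because the equilibria form positive-dimensional manifolds, a priori a trajectory approaching $\mathcal{S}$ might drift along $\mathcal{M}_i\cap\mathcal{P}$ without converging to a single point. This is where normal hyperbolicity enters, via Aulbach's theorem (Proposition \ref{prop:aulbach}); its applicability hinges on the tangent-space identification $\ker\ma{F} = \mathcal{X}_i\cap\mathcal{P} = \ts[\mathcal{M}_i\cap\mathcal{P}]{\ma{R}}$ established in \eqref{eq:ts} together with Proposition \ref{prop:zeros}, which ensures exactly $\dim\SO - \dim(\mathcal{M}_i\cap\mathcal{P})$ eigenvalues with nonzero real part in the normal direction. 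Once pointwise convergence is granted the remainder of the proof is essentially the formal machinery of Freeman's theorem.
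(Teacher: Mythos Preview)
Your proposal is correct and follows essentially the same route as the paper: invoke Proposition \ref{prop:conv} to secure pointwise convergence and hence condition \eqref{eq:convergent}, apply Proposition \ref{prop:indirect} for exponential instability on $\mathcal{M}\backslash\{\ma{I}\}$ and exponential stability at $\ma{I}$, and then use Freeman's result (Proposition \ref{prop:unstable}) to conclude that the complement of the basin of $\ma{I}$ is meager and of measure zero. Your closing paragraph correctly identifies that the real work has already been absorbed into Propositions \ref{prop:zeros}--\ref{prop:conv}.
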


\begin{proof}
All trajectories converge to equilibria by Proposition \ref{prop:conv}, which implies that condition \eqref{eq:convergent} of Proposition \ref{prop:unstable} is fulfilled. The set $\mathcal{M}\backslash\{\ma{I}\}$ consists of exponentially unstable equilibria by Proposition \ref{prop:indirect}. By Proposition \ref{prop:unstable}, the region of attraction of $\mathcal{M}\backslash\{\ma{I}\}$ is meager and has zero measure on $\SO$. It follows that the identity matrix is almost globally attractive. The identity matrix is exponentially stable by Proposition \ref{prop:indirect} and by the principle of stability in the first approximation.
\end{proof}

\section{Exact Solutions on $\SOT$}

\label{sec:exact}

\noindent We provide the exact solutions in the case of $\SOT$. This case is the most interesting from an applications point of view. If necessary, exchange the roles of $\ma{P}$ and $\ma{Q}$ as well as the coordinates such that $\ma{P}=\ve[1]{e}\vet[1]{e}$ and $\ma{Q}=\ma{I}-\ve[1]{e}\vet[1]{e}$. This can be done without loss of generality since either $(\rank\ma{P},\rank\ma{Q})=(1,2)$,  $(\rank\ma{P},\rank\ma{Q})=(2,1)$, $(\rank\ma{P},\rank\ma{Q})=(3,0)$, or $(\rank\ma{P},\rank\ma{Q})=(0,3)$. The first two cases imply $\ma{P}=\ve{e}\vet{e}$ and $\ma{Q}=\ve{e}\vet{e}$ respectively for some $\ve{e}\in\St$. A change of coordinates yields $\ve{e}=\ve[1]{e}$. The case of $\ma{Q}=\ve[1]{e}\vet[1]{e}$ is easier because the control \eqref{eq:control2014} simplifies to $\ma{U}=\ma{P}\mat{R}-\ma{R}\ma{P}$. The solution is given implicitly by Proposition \ref{th:RP} and the constraint $\ma{R}\in\SOT$. It is provided explicitly by \citep{markdahl2013analytical,markdahl2015automatica}. The last two cases imply $\ma{P}=\ma{I}$ or $\ma{Q}=\ma{I}$ respectively and is covered in Proposition \ref{th:RP}, as well as in \citep{markdahl2013analytical} since it can be generated by a simpler algorithm. Note that the first case is the geodesic control law for the reduced attitude.


Let us denote
\begin{align}
\ma{R}=\begin{bmatrix}
r_{11} & \ve[12]{r}\\
\ve[21]{r} & \ma[22]{R}
\end{bmatrix}.\label{sys:block}
\end{align}
Algorithm \ref{algo:2014} with $\ma{P}=\ve[1]{e}\vet[1]{e}$ and $\ma{Q}$ redefined as $\ma{Q}=(\ma{I}-\ma{P})$ generates the following system on $\SO$: $\dot{r}_{11}=1-r_{11}^2$, $\vd[21]{r}=-r_{11}\ve[21]{r}$, $\vd[12]{r}=\ve[12]{r}[-r_{11}\ma{I}+k(\mat[22]{R}-\ma[22]{R})]$, and $\md[22]{R}=-\ve[21]{r}\ve[12]{r}+k\ma[22]{R}(\mat[22]{R}-\ma[22]{R})$.

Proposition \ref{th:RP} tells us that
\begin{align*}
\ma{R}(t)\ma{P}&=\begin{bmatrix}
r_{11} & \ve{0}\\
\ve[21]{r} & \ma{0}
\end{bmatrix}\\
&=[\sinh(\ma{P}t)+\cosh(\ma{P}t)\ma[0]{R}\ma{P}][\cosh(\ma{P}t)+\sinh(\ma{P}t)\ma[0]{R}\ma{P}]\inv.
\end{align*}
On the form of the block-matrix partition in \eqref{sys:block}, the solutions are given by
\begin{align*}
r_{11}(t)=\tanh(t+\Atanh r_{11,0}), \quad \ve[21]{r}(t)=\frac{\sech t}{1+\tanh(t)r_{11,0}}\ve[21,0]{r},
\end{align*}
for all $r_{11,0}\in(-1,1]$, see \citep{markdahl2013analytical}. 
It remains to solve \eqref{sys:block} for $\ve[12]{r}(t)$ and $\ma[22]{R}(t)$. Note however that due to the constraints that define $\SOT$ it will suffice to solve the equations defining a subset of the elements of $\ve[12]{r}(t)$ and $\ma[22]{R}(t)$ to determine $\ma{R}(t)$.


Use the relation on $\SO$ provided by Lemma \ref{eqB:relations} in Appendix \ref{appB:lemmas} to find that
\begin{align*}
\md[22]{R}&=-r_{11}\ma[22]{R}-\ma{S}\ma[22]{R}\ma{S}+k\trace(\ma[22]{R}\ma{S})\ma[22]{R}\ma{S}.
\end{align*}
Multiplying by $\ma{S}$ yields
\begin{align*}
\md[22]{R}\ma{S}&=-r_{11}\ma[22]{R}\ma{S}+\ma{S}\ma[22]{R}-k\trace(\ma[22]{R}\ma{S})\ma[22]{R}.
\end{align*}
Take the trace and substitute the relations regarding $\SOT$ from Lemma \ref{eqB:relations} to obtain
\begin{align}
\trace\md[22]{R}&=k(1+r_{11})^2+(1-r_{11})\trace\ma[22]{R}-k(\trace\ma[22]{R})^2,\label{eq:trR22}\\
\trace\md[22]{R}\ma{S}&=(1-r_{11}-k\trace\ma[22]{R})\trace\ma[22]{R}\ma{S}.\label{eq:trR22S}
\end{align}

\begin{proposition}\label{propB:tr}
The unique solution to \eqref{eq:trR22} and \eqref{eq:trR22S} for $\ma[0]{R}\in\SO$ is given by  
\begin{align*}
\trace\ma[22]{R}(t)={}&\tanh\{f[\ma[0]{R}]-k\log[1-r_{11}(t)]\}\{1+r_{11}[t]\},\\
\trace\ma[22]{R}(t)\ma{S}={}& g\{\ma[0]{R}\}\exp\{t\}\sech\{t+\Atanh r_{11,0}\}\cdot\\
&\sech\{f[\ma[0]{R}]-k\log[1-r_{11}(t)]\},
\end{align*}
where 
\begin{align*}
f(\ma[0]{R})&=\Atanh[(1+r_{11,0})\trace\ma[22,0]{R}]+k\log[1-r_{11,0}],\\ 
r_{11}(t)&=\tanh(t+\Atanh r_{11,0}),\\ g(\ma[0]{R})&=\{\sech[\Atanh r_{11,0}]\sech[\Atanh(1+r_{11,0})\trace\ma[22,0]{R}]\}\inv\trace\ma[0]{R}\ma{S}.
\end{align*}
\end{proposition}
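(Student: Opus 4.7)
The plan is to reduce each scalar equation to a standard solvable ODE and then to integrate using the already-established closed form $r_{11}(t) = \tanh(t + \Atanh r_{11,0})$. Let me write $p = \trace\ma[22]{R}$ and $q = \trace\ma[22]{R}\ma{S}$ for brevity. From $\dot r_{11} = (1-r_{11})(1+r_{11})$ one obtains the two antiderivatives
\[ \int_0^t (1+r_{11}(\tau))\,d\tau = \log\frac{1-r_{11,0}}{1-r_{11}(t)}, \qquad \int_0^t (1-r_{11}(\tau))\,d\tau = 2t + \log\frac{1-r_{11}(t)}{1-r_{11,0}}, \]
which will be used repeatedly.

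First I would handle \eqref{eq:trR22} by introducing $u = p/(1+r_{11})$. After expanding $\dot p = \dot u(1+r_{11}) + u\dot r_{11}$ and using $\dot r_{11} = (1-r_{11})(1+r_{11})$, the Riccati-like equation collapses to the separable form $\dot u = k(1+r_{11})(1-u^2)$. Separation of variables combined with the first antiderivative above gives $\Atanh u(t) = \Atanh u_0 - k\log[(1-r_{11}(t))/(1-r_{11,0})]$. Solving for $u$ and multiplying through by $1+r_{11}(t)$ reproduces the claimed closed form for $\trace\ma[22]{R}(t)$, with the initial-condition constant assembling into $f(\ma[0]{R})$.

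The equation \eqref{eq:trR22S} is linear in $q$, so the integrating-factor approach yields $q(t) = q_0\exp\int_0^t[(1-r_{11}) - kp]\,d\tau$. The $1-r_{11}$ contribution is handled by the second antiderivative. For the $kp$ term I would use the substitution $s(\tau) = f(\ma[0]{R}) - k\log[1-r_{11}(\tau)]$, since $ds = k(1+r_{11})\,d\tau$ turns $k\int p\,d\tau$ into $\int \tanh s\,ds = \log\cosh s + C$. Collecting all factors and applying $\cosh\Atanh x = 1/\sqrt{1-x^2}$ together with $(1-\tanh\alpha)\cosh\alpha = e^{-\alpha}$ produces an expression of the form $q_0\cdot(\text{initial-condition factors})\cdot e^t\,\sech(t+\Atanh r_{11,0})\,\sech s(t)$, matching the stated formula with the constants collapsing into $g(\ma[0]{R})$.

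Existence and uniqueness on $[0,\infty)$ follow from Picard--Lindelöf: the right-hand sides are smooth in $(p,q)$, $r_{11}(t) \in (-1,1]$ whenever $r_{11,0} \in (-1,1]$, and the $\SOT$ constraint $p^2 + q^2 = (1+r_{11})^2$ keeps $q$ bounded. The substitution $u = p/(1+r_{11})$ breaks precisely at $r_{11} = -1$, which coincides with the excluded antipodal initial condition already identified as the unstable manifold in Proposition \ref{propB:reduced}. I expect the main obstacle to be purely algebraic bookkeeping: verifying that the constants produced at $t = 0$ in the two separate integrations combine, through the hyperbolic identities above, into exactly the compact expressions for $f(\ma[0]{R})$ and $g(\ma[0]{R})$ quoted in the proposition.
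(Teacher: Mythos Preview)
Your approach is correct and genuinely different from the paper's. The paper does not derive the formulas at all: it invokes Lemma~\ref{leB:unique} for existence and uniqueness and then simply \emph{verifies} by direct differentiation that the displayed expressions satisfy \eqref{eq:trR22}--\eqref{eq:trR22S} and the initial conditions. You instead \emph{construct} the solution: the substitution $u=p/(1+r_{11})$ correctly collapses the Riccati equation \eqref{eq:trR22} to the separable ODE $\dot u=k(1+r_{11})(1-u^2)$, and \eqref{eq:trR22S} then yields to an integrating factor once $p$ is known; your antiderivative identities and the change of variable $ds=k(1+r_{11})\,d\tau$ for $\int kp\,d\tau$ are all sound. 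The paper's route is mechanical and requires no insight once the answer is in hand, whereas yours explains where the hyperbolic structure comes from. One payoff of your derivation is that it exposes what appears to be a misprint in the statement: integrating gives $u_0=\trace\ma[22,0]{R}/(1+r_{11,0})$, so the constant should be $f(\ma[0]{R})=\Atanh\bigl[\trace\ma[22,0]{R}/(1+r_{11,0})\bigr]+k\log(1-r_{11,0})$ rather than $\Atanh[(1+r_{11,0})\trace\ma[22,0]{R}]$ (and likewise inside $g$); substituting $t=0$ into the printed formula returns $(1+r_{11,0})^{2}\trace\ma[22,0]{R}$, not $\trace\ma[22,0]{R}$. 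So the ``algebraic bookkeeping'' you flag as the main obstacle will in fact not close against the displayed $f,g$---but that is a defect of the statement, not of your method.
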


\begin{proof}
Global existence and uniqueness is implied by Lemma \ref{leB:unique} in Appendix \ref{appB:lemmas}. It remains to verify that the proposed solution solves the required system. Note that 
\begin{align*}
\trace\md[22]{R}(t)={}&(1-\tanh^2\{f[\ma[0]{R}]-k\log[1-r_{11}(t)]\})(-1)^2k\smash{\tfrac{1-r_{11}^2\{t\}}{1-r_{11}\{t\}}}(1+r_{11}\{t\})\\
&+\tanh(f\{\ma[0]{R}\}-\log\{1-r_{11}[t]\})(1-r_{11}^2\{t\})\\
={}&k(1-\tanh^2\{f[\ma[0]{R}]-\log[1-r_{11}(t)]\})(1+r_{11}\{t\})^2
+\\
&\tanh(f\{\ma[0]{R}\}-\log\{1-r_{11}[t]\})(1+r_{11}\{t\})(1-r_{11}\{t\})\\
={}&k[1+r_{11}(t)]^2+[1-r_{11}(t)]\trace\ma[22]{R}(t)-k[\trace\ma[22]{R}(t)]^2,\\
%
&\trace\ma[22]{R}\{t\}\ma{S}-g\{\ma[0]{R}\}\tanh\{t+\Atanh r_{11,0}\}\trace\ma[22]{R}\{t\}\ma{S}-\\
&g\{\ma[0]{R}\}\exp\{t+\Atanh r_{11,0}\}\sech\{t+\Atanh r_{11,0}\}\cdot\\
&\sech\{f[\ma[0]{R}]-k\log[1-r_{11}(t)]\}\tanh\{f[\ma[0]{R}]-\log[1-r_{11}(t)]\}\cdot\\
&\{-1\}^2k\smash{\tfrac{1-r_{11}^2[t]}{1-r_{11}[t]}}\\
={}&[1-r_{11}(t)-k\trace\ma[22]{R}(t)]\trace\ma[22]{R}(t)\ma{S}.
\end{align*}
Moreover, $\trace\ma[22]{R}(0)=\tanh(\Atanh\trace\ma[22,0]{R})=\trace\ma[22,0]{R}$ and  $\trace\ma[22]{R}(0)\ma{S}=\trace\ma[22,0]{R}\ma{S}$.
\end{proof}

\begin{remark}
The use of the principal inverse hyperbolic tangent $\Atanh:\C\rightarrow\C\cup\{\infty\}$, as described in Section \ref{secB:prel}, is convenient here since $(1+r_{11})\trace\ma[22]{R}\in(-4,4]$ whereas $\Atanh:(-1,1)\rightarrow\R$. The appearance of a discontinuous function in the exact solutions need not lead to a loss of continuous dependence on the initial conditions since the inverse hyperbolic tangent only appears as part of an argument of the hyperbolic tangent. If required, it is possible to find an expression for the exact solutions that does not rely on the use of the inverse hyperbolic tangent by applying a sum of arguments formula. This would however result in expressions that make the proof of Proposition \ref{propB:tr} clunky. See \citep{markdahl2012exact} for more details. 
\end{remark}

\begin{theorem}\label{thB:sol}
The solution to the closed-loop system generated by Algorithm \ref{algo:2014} in the case of $\ma{R}(0)=\ma[0]{R}\notin\NO$ is given by 
\begin{align*}
r_{11}(t)=\smash{\tanh(t+\Atanh r_{11,0})},\quad \ve[21]{r}(t)=\frac{\sech t}{1+\tanh(t)r_{11,0}}\ve[21,0]{r},
\end{align*}
which specify $\ma{R}\ve[1]{e}$, and the unique solution to the following linear system
\begin{align*}
\begin{bmatrix}
\maspace\phantom{-}\,\,(\ma{R}\ve[1]{e})\mtr & \maspace\phantom{-}\ve{0}\\
\maspace\phantom{-}\ma{S}(\ma{R}\ve[1]{e}) & \maspace-\ma{I}\\
\maspace\phantom{-}\vet[2]{e} & \maspace\phantom{-}\,\,\vet[3]{e}\\
\maspace-\vet[3]{e} & \maspace\phantom{-}\,\,\vet[2]{e}
\end{bmatrix}\begin{bmatrix}
\ma{R}\ve[2]{e}\\
\ma{R}\ve[3]{e}
\end{bmatrix}=\begin{bmatrix}
0\\
\ve{0}\\
\trace\ma[22]{R}\\
\trace\ma[22]{R}\ma{S}
\end{bmatrix},
\end{align*}
where $\ma{S}:\R^3\rightarrow\sot$ is the map defined by $\ma{S}(\ve{x})\ve{y}=\ve{x}\times\ve{y}$ for all $\ve{x},\ve{y}\in\R^3$ and $\trace \ma[22]{R}$, $\trace\ma[22]{R}\ma{S}$ are given by Proposition \ref{propB:tr}.
\end{theorem}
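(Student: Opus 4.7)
The plan is to decompose $\ma{R}$ column-wise. The first column $\ma{R}\ve[1]{e}$ comes directly from specializing Proposition \ref{th:RP} to $\ma{P}=\ve[1]{e}\vet[1]{e}$, which provides $\ma{R}(t)\ma{P}$ in closed form; reading off the $(1,1)$ entry and the subdiagonal portion of the first column yields the stated expressions for $r_{11}(t)$ and $\ve[21]{r}(t)$, as also derived in \citep{markdahl2013analytical}. The hypothesis $\ma[0]{R}\notin\NO$ rules out $r_{11,0}=-1$: unitarity of the first column would otherwise force $\ma[0]{R}\ve[1]{e}=-\ve[1]{e}$ so that $-1\in\sigma(\ma[0]{R})$. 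Hence $\Atanh r_{11,0}$ is finite, $r_{11}(t)\in(-1,1]$ for all $t\geq0$, and the first-column formulas are well-defined.

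I then show that the remaining columns $\ma{R}\ve[2]{e}$ and $\ma{R}\ve[3]{e}$ satisfy the stated $6\times 6$ linear system. Row 1 is the orthogonality $(\ma{R}\ve[1]{e})\mtr\ma{R}\ve[2]{e}=0$ between the first two columns, while Rows 2--4 encode $\ma{R}\ve[3]{e}=\ma{S}(\ma{R}\ve[1]{e})\ma{R}\ve[2]{e}=\ma{R}\ve[1]{e}\times\ma{R}\ve[2]{e}$, which follows from $\ma{R}\in\SOT$. The last two rows translate Proposition \ref{propB:tr}: a direct block computation shows
\begin{align*}
\vet[2]{e}\ma{R}\ve[2]{e}+\vet[3]{e}\ma{R}\ve[3]{e}=\trace\ma[22]{R},\qquad\vet[2]{e}\ma{R}\ve[3]{e}-\vet[3]{e}\ma{R}\ve[2]{e}=\trace\ma[22]{R}\ma{S},
\end{align*}
so the true columns satisfy the linear system with the right-hand side supplied by Proposition \ref{propB:tr}.

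The main obstacle is to establish that the coefficient matrix is nonsingular, since a priori it is not obvious that combining the $\SOT$ geometry with two scalar trace quantities pins down six scalar unknowns. I proceed by elimination. Let $(\ve{x},\ve{y})\in\R^3\times\R^3$ lie in the kernel. Rows 2--4 give $\ve{y}=\ma{R}\ve[1]{e}\times\ve{x}$. Writing $\ma{R}\ve[1]{e}=(a,b,c)\mtr$ and substituting into Rows 5--6 yields $(1+a)x_2=bx_1$ and $(1+a)x_3=cx_1$. Combining these with Row 1, $ax_1+bx_2+cx_3=0$, and using $a^2+b^2+c^2=1$, the system collapses to $(1+a)x_1=0$. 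Since $a=r_{11}(t)>-1$ for all $t\geq0$ by the first step, we get $\ve{x}=\ve{0}$ and thus $\ve{y}=\ve{0}$. The coefficient matrix is therefore invertible precisely on the set where the theorem's hypothesis holds, which both completes the argument and clarifies why the assumption $\ma[0]{R}\notin\NO$ is needed.
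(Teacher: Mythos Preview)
Your argument is correct. The derivation of the first column and the interpretation of the six rows of the linear system match the paper exactly, and your verification that the last two rows equal $\trace\ma[22]{R}$ and $\trace\ma[22]{R}\ma{S}$ is the same as the paper's.

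The difference is in how nonsingularity of the coefficient matrix $\ma{A}$ is established. The paper forms $\ma{B}=\ma{A}^{\!\top}\ma{A}$, inverts the $(2,2)$ block explicitly as $\ma{I}-\tfrac12(\ve[2]{e}\vet[2]{e}+\ve[3]{e}\vet[3]{e})$, and then argues that the Schur complement
\[
\ma{C}=\ma{R}\ve[1]{e}\vet[1]{e}\mat{R}+\tfrac12(\ve[2]{e}\vet[2]{e}+\ve[3]{e}\vet[3]{e})+\tfrac12\,\ma{S}(\ma{R}\ve[1]{e})(\ve[2]{e}\vet[2]{e}+\ve[3]{e}\vet[3]{e})\ma{S}(\ma{R}\ve[1]{e})^{\!\top}
\]
is positive definite ``by inspection''. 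Your route is a direct kernel computation: substitute $\ve{y}=\ma{R}\ve[1]{e}\times\ve{x}$ from rows 2--4 into rows 5--6, combine with row 1 and $\|\ma{R}\ve[1]{e}\|_2=1$, and reduce everything to $(1+r_{11})x_1=0$. This is more elementary, avoids the Schur-complement machinery, and has the merit of making the role of the hypothesis $\ma[0]{R}\notin\NO$ completely transparent: the matrix is invertible precisely when $r_{11}(t)>-1$, and you show this holds for all $t\geq0$ exactly because $r_{11,0}\neq-1$. The paper's approach, by contrast, buries this dependence inside the positive-definiteness claim for $\ma{C}$.
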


\begin{proof}
The first equation follows from $\ma{R}\mat{R}\hspace{-1mm}=\ma{I}$. The second equation states that $\ma{R}\ve[1]{e}\times\ma{R}\ve[2]{e}=\ma{R}\ve[3]{e}$. The third and fourth equation follow from Proposition \ref{propB:tr}. 

Let us verify the uniqueness of the solution under the stated assumptions. Let $\ma{A}$ denote the system matrix. The matrix $\ma{A}$ is nonsingular if and only if 
%
\begin{align}
\ma{B}&=\ma{A}\mtr\ma{A}=\begin{bmatrix}
\ma{R}\ve[1]{e} & -\ma{S}(\ma{R}\ve[1]{e}) & \ve[2]{e} & -\ve[3]{e}\\
\ma{0} & -\ma{I} & \ve[3]{e} & \phantom{-}\ve[2]{e}
\end{bmatrix}\begin{bmatrix}
\maspace\phantom{-}\,\,(\ma{R}\ve[1]{e})\mtr & \maspace\phantom{-}\ve{0}\\
\maspace\phantom{-}\ma{S}(\ma{R}\ve[1]{e}) & \maspace-\ma{I}\\
\maspace\phantom{-}\vet[2]{e} & \maspace\phantom{-}\,\,\vet[3]{e}\\
\maspace-\vet[3]{e} & \maspace\phantom{-}\,\,\vet[2]{e}
\end{bmatrix}\nonumber\\
&=\begin{bmatrix}
\ma{R}\ve[1]{e}\vet[1]{e}\mat{R}-\ma{S}(\ma{R}\ve[1]{e})^2+\ve[2]{e}\vet[2]{e}+\ve[3]{e}\vet[3]{e} & \ma{S}(\ma{R}\ve[1]{e})+\ve[2]{e}\vet[3]{e}-\ve[3]{e}\vet[2]{e}\\
-\ma{S}(\ma{R}\ve[1]{e})+\ve[3]{e}\vet[2]{e}-\ve[2]{e}\vet[3]{e} & \ma{I}+\ve[3]{e}\vet[3]{e}+\ve[2]{e}\vet[2]{e}
\end{bmatrix}\nonumber\\
&=\begin{bmatrix}\ma[11]{B} & \ma[12]{B}\\
\ma[21]{B} & \ma[22]{B}\end{bmatrix}\label{eqB:unique}
\end{align}
is nonsingular. The matrix $\ma{B}$ is nonsingular if one of its diagonal blocks and the Schur complement of that block are both nonsingular. Note that 
$\ma[22]{B}\inv=(\ma{I}+\ve[3]{e}\vet[3]{e}+\ve[2]{e}\vet[2]{e})\inv=\ma{I}-\tfrac12(\ve[3]{e}\vet[3]{e}+\ve[2]{e}\vet[2]{e})$. The Schur complement of $\ma[22]{B}$ is
\begin{align}
\ma{C}={}&\ma[11]{B}-\ma[12]{B}\ma[22]{B}\inv\ma[21]{B}\nonumber\\
={}&\ma{R}\ve[1]{e}\vet[1]{e}\mat{R}+(\ve[2]{e}\vet[3]{e}-\ve[3]{e}\vet[2]{e})\ma[22]{B}\inv(\ve[2]{e}\vet[3]{e}-\ve[3]{e}\vet[2]{e})+\ve[2]{e}\vet[2]{e}+\ve[3]{e}\vet[3]{e}+\nonumber\\
{}&\ma{S}(\ma{R}\ve[1]{e})(\ma[22]{B}\inv-\ma{I})\ma{S}(\ma{R}\ve[1]{e})\nonumber\\
={}&\ma{R}\ve[1]{e}\vet[1]{e}\mat{R}+\tfrac12(\ve[2]{e}\vet[2]{e}+\ve[3]{e}\vet[3]{e})+\tfrac12\ma{S}(\ma{R}\ve[1]{e})(\ve[2]{e}\vet[2]{e}+\ve[3]{e}\vet[3]{e})\ma{S}(\ma{R}\ve[1]{e})\mtr,\label{eqB:correct}
\end{align}
which is positive definite for all $\ma{R}\in\SOT$ by inspection.\end{proof}

\begin{remark}
The explicit solution to the linear system of equations \eqref{eqB:unique} is given  in \citep{markdahl2012exact}. The solution can also be obtained by solving the system using the Schur complement provided in the proof of Theorem \ref{thB:sol}. The explicit expression for the exact solutions, which is somewhat long and complicated, is omitted.
\end{remark}

\section{Numerical Example}

\noindent To provide an intuitive understanding for the workings of Algorithm \ref{algo:2014} let us consider an example of its behavior in simulation. The system trajectory in the case of
\begin{align*}
\ma{P}=\begin{bmatrix}
0 & 0 & 0\\
0 & 1 & 0\\
0 & 0 & 0
\end{bmatrix},\quad
\ma{R}(0)=\ma[0]{R}=\begin{bmatrix}
\phantom{-}0 & \phantom{-}\frac{1}{\sqrt3} & -\frac{2}{\sqrt6}\\
\phantom{-}\frac{1}{\sqrt2} & -\frac{1}{\sqrt3} & -\frac{1}{\sqrt{6}}\\
-\frac{1}{\sqrt{2}} & -\frac{1}{\sqrt{3}} & -\frac{1}{\sqrt{6}}
\end{bmatrix}
\end{align*}
is displayed in Figure \ref{fig:numerical1}. Observe that the reduced attitude corresponding to the diagonal path in Figure \ref{fig:numerical1} moves along a great circle on the unit sphere whereas the other two paths are non-geodesic. This is also clear from Figure \ref{fig:convergence1}; the shortest travelled distance equals the corresponding initial geodesic distance. The geodesic distance from $\ma{R}\ve[i]{e}$ to $\ve[i]{e}$ is given by $\arccos\ma[ii]{R}$, as can be shown by taking the inner product of the two vectors. Note that although $\arccos\ma[22]{R}(0)=\max_{i\in[3]}\arccos\ma[ii]{R}(0)$, it is $\arccos\ma[22]{R}(t)$ that converges to zero the fastest initially, see Figure \ref{fig:convergence1}.

\begin{figure}[htb!]
\centering
\includegraphics[width=0.9\textwidth]{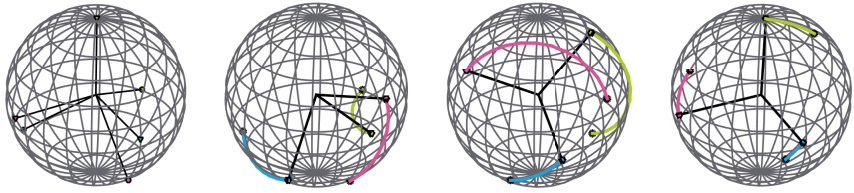}
\caption{\label{fig:numerical1}The initial frame $\ma[0]{R}$ (circles) and desired frame $\ma{I}$ (arrows). A display of the system evolution on the time intervals $[0,1.2]$, $[1.2,2.4]$, and $[2.4,3.9]$ (left to right).}
\end{figure}
\begin{figure}[htb!]
\centering
\psfrag{t}{${\scriptstyle t}$}
\includegraphics[width=0.6\textwidth]{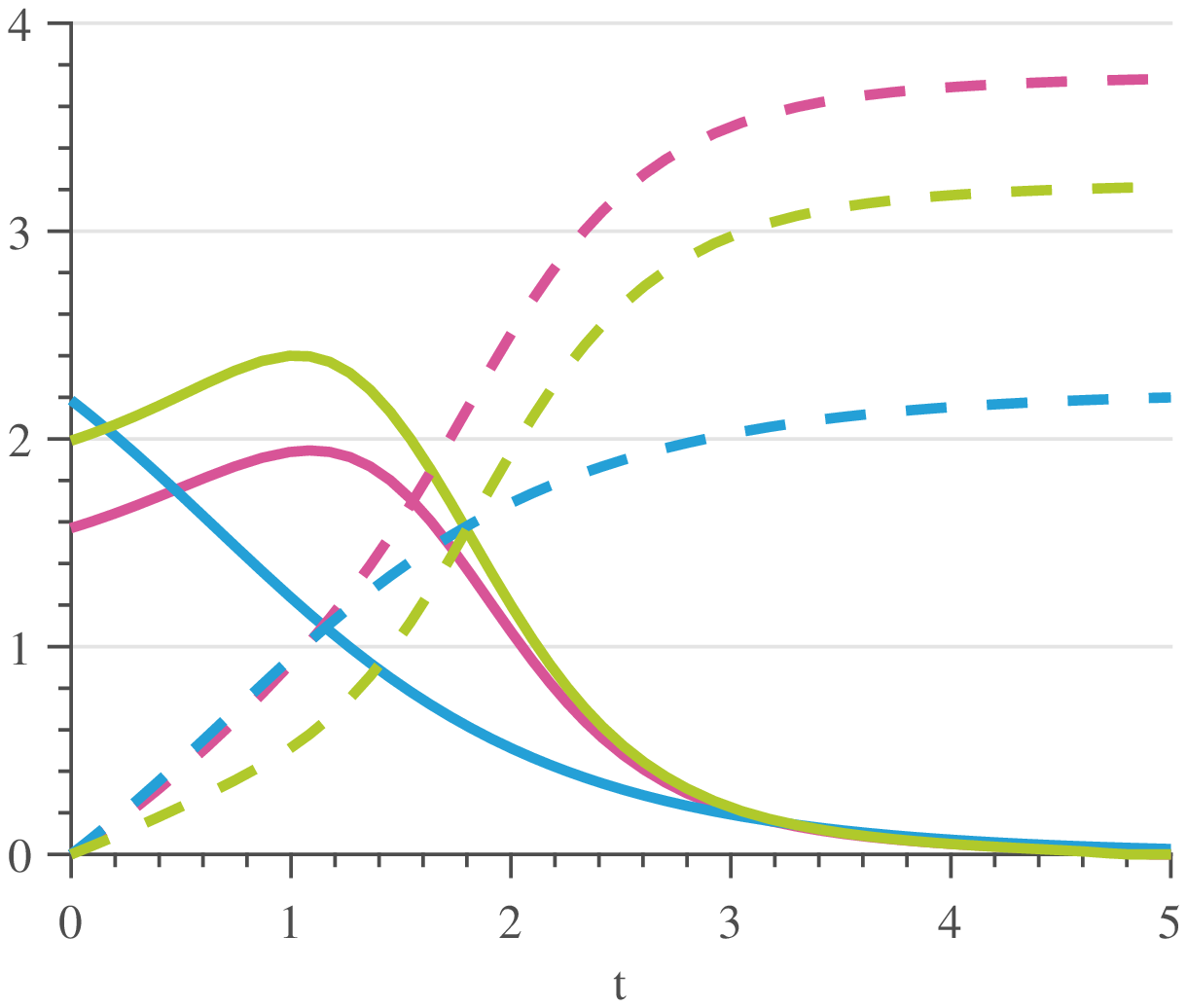}
\psfrag{0}{${\scriptstyle 0}$}
\psfrag{1}{${\scriptstyle 1}$}
\psfrag{2}{${\scriptstyle 2}$}
\psfrag{3}{${\scriptstyle 3}$}
\psfrag{4}{${\scriptstyle 4}$}
\psfrag{5}{${\scriptstyle 5}$}
\caption{\label{fig:convergence1}The errors $\arccos(\ma[ii]{R})$ (solid lines) and travelled distance $\int_0^t\|\md{R}\ve[i]{e}\|_2\diff\tau$ (dashed lines).}
\end{figure}

\section{Conclusions}

\noindent This paper begins with the optimal control problem on $\SOT$ of minimizing the distance traveled by the reduced attitude while stabilizing the full attitude. Consider a two-step control sequence: first stabilize the reduced attitude and then align the remaining two vectors by means of a planar rotation. Its use would be inadvisable in practice due to a lack of either smoothness or precision; the first step must either display finite time convergence or a steady-state error. Rather, this paper fuses the two steps into one smooth motion. Being just a weighted sum of the two sequential control laws---it remarkably achieves almost global exponential stability. The setting is generalized to $\SO$ where geometric control techniques allow us to prove almost global exponential stability for a class of feedback laws that use orthogonal projection matrices as gain factors. Throughout the paper, we  contrasts the cases of $\SOT$ and $\SO$ with each other. Working with rotation matrices directly on the manifolds rather than in Euclidean space by means of parametrizations makes generalizations from $\SOT$ to $\SO$ come naturally, showcasing the strengths of the geometric control approach. 


\section{Acknowledgements}

\noindent  The authors gratefully acknowledges constructive and insightful feedback received from the anonymous reviewers.

\section{References}

\bibliographystyle{plain}
\bibliography{systemcontrolletters}

\renewcommand{\thesection}{\Alph{section}}
\setcounter{section}{0}
\section{Lemmas}  
\label{appB:lemmas}

\begin{lemma}\label{leB:unique}
The closed-loop system generated by Algorithm \ref{algo:2014} and system \eqref{sys:block} have unique solutions that belong to $\SO$ for all $t\in[0,\infty)$.
\end{lemma}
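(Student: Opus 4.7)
The plan is to verify the three standard ingredients for an ODE on a compact embedded submanifold: local existence and uniqueness via Picard--Lindel\"of, forward invariance of $\SO$ by tangency of the vector field, and global existence via compactness. The claim for system~\eqref{sys:block} will then follow as an immediate corollary, since that system is merely a block-matrix rewriting of the closed-loop dynamics generated by Algorithm~\ref{algo:2014} under the choice $\ma{P}=\ve[1]{e}\vet[1]{e}$.

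First, I would settle local existence and uniqueness. The right-hand side $\ma{F}(\ma{R})=\ma{P}-\ma{R}\ma{P}\ma{R}+k\,\ma{R}\ma{Q}(\mat{R}-\ma{R})\ma{Q}$ is a polynomial in the entries of $\ma{R}$, hence smooth and locally Lipschitz on $\R^{n\times n}$. Picard--Lindel\"of then provides a unique maximal solution for every initial condition $\ma[0]{R}\in\R^{n\times n}$.

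Second, I would prove forward invariance of $\SO$ by showing that $\ma{F}$ is tangent to $\SO$ along $\SO$, that is, that $\ma{F}(\ma{R})\mat{R}\in\so$ whenever $\ma{R}\in\SO$. Using $\ma{R}\mat{R}=\ma{I}$, a direct computation yields $\ma{F}(\ma{R})\mat{R}=\ma{P}\mat{R}-\ma{R}\ma{P}+k\,\ma{R}\ma{Q}(\mat{R}-\ma{R})\ma{Q}\mat{R}=\ma{U}$, the feedback of Algorithm~\ref{algo:2014}. The skew-symmetry of $\ma{U}$ then follows by elementary transposition, using only $\mat{P}=\ma{P}$ and $\mat{Q}=\ma{Q}$: the first two terms swap with a sign flip, while the conjugated bracket $\ma{R}\ma{Q}(\mat{R}-\ma{R})\ma{Q}\mat{R}$ is manifestly antisymmetric under transposition. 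The standard invariance theorem for ODEs whose drift is tangent to an embedded submanifold then guarantees that any solution originating in $\SO$ remains in $\SO$ on its maximal interval of existence.

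Global existence on $[0,\infty)$ is then immediate: since $\SO$ is compact and the solution is trapped in it, the trajectory cannot escape any bounded set in finite time, so the maximal forward interval of existence is necessarily $[0,\infty)$. For system~\eqref{sys:block}, the same conclusions transfer verbatim through the block-matrix correspondence with the closed-loop dynamics. The only genuine content in the argument is the skew-symmetry calculation for $\ma{U}$, which is elementary, so I do not anticipate any serious obstacle.
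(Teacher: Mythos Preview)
Your proposal is correct and follows essentially the same strategy as the paper's proof: local Lipschitz continuity (you note the vector field is polynomial; the paper notes it is a sum and product of Lipschitz maps), tangency of the drift to $\SO$ via the skew-symmetry of $\ma{U}$, and global existence from compactness and invariance of $\SO$. Your treatment is somewhat more explicit in verifying $\ma{F}(\ma{R})\mat{R}=\ma{U}\in\so$, whereas the paper simply appeals to the kinematics $\md{R}=\ma{U}\ma{R}$ with $\ma{U}\in\so$; and your observation that system~\eqref{sys:block} is just the block rewriting of the closed-loop dynamics for $\ma{P}=\ve[1]{e}\vet[1]{e}$ is exactly how the paper handles it as well.
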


\begin{proof}
The kinematics \eqref{eqB:SOn} constrains the solutions to lie in $\SO$ for any initial condition on $\SO$ by restricting the instantaneous movement to $\ts[\SO]{\ma{I}}=\so$. Recall that it suffices to prove that the right-hand side is locally Lipschitz in $\ma{R}$ for all $\ma{R}\in\SO$ to establish global existence and uniqueness of solutions to \eqref{eq:closed2014} and system \eqref{sys:block} due to $\SO$ being a compact and invariant subset of $\R^{n\times n}$ \citep{khalil2002nonlinear}. Furthermore, any linear combination or product of two functions that are Lipschitz on a domain is also Lipschitz on the same domain. It is clear that the two right-hand sides can be decomposed in this manner using functions that are Lipschitz on $\SO$.
\end{proof}

\begin{lemma}\label{le:PRP} Let $\ma{P}$ be an orthogonal projection and $\ma{R}\in\SO$. Then $-1\notin\sigma(\ma{R})$ implies $-1\notin\sigma(\ma{P}\ma{○R}\ma{P})$. 
\end{lemma}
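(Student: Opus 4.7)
\textbf{Proof proposal for Lemma \ref{le:PRP}.}

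The plan is to argue by contradiction, exploiting the fact that orthogonal projections and orthogonal matrices are contractions in the spectral norm, so an eigenvalue of modulus one can only occur when each inequality in a chain of norm bounds is tight. Suppose for contradiction that there exists a nonzero $\ve{v}\in\C^n$ with $\ma{P}\ma{R}\ma{P}\ve{v}=-\ve{v}$. (We may assume $\ma{P}\neq\ma{0}$, since otherwise $\sigma(\ma{P}\ma{R}\ma{P})=\{0\}$ and the claim is vacuous.) Using $\|\ma{P}\|_2\leq 1$ and $\|\ma{R}\|_2=1$, I would chain
\begin{align*}
\|\ve{v}\|=\|\ma{P}\ma{R}\ma{P}\ve{v}\|\leq\|\ma{P}\ve{v}\|\leq\|\ve{v}\|,
\end{align*}
forcing both inequalities to be equalities.

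The first equality I extract is $\|\ma{P}\ve{v}\|=\|\ve{v}\|$. Since $\ma{P}$ is an orthogonal projection, the orthogonal decomposition $\ve{v}=\ma{P}\ve{v}+(\ma{I}-\ma{P})\ve{v}$ and the Pythagorean identity $\|\ve{v}\|^2=\|\ma{P}\ve{v}\|^2+\|(\ma{I}-\ma{P})\ve{v}\|^2$ (valid in the Hermitian inner product because $\ma{P}$ is real symmetric) give $(\ma{I}-\ma{P})\ve{v}=\ve{0}$, that is, $\ma{P}\ve{v}=\ve{v}$. Substituting back, the eigenvalue equation simplifies to $\ma{P}\ma{R}\ve{v}=-\ve{v}$.

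Next I use the second equality. Set $\ve{w}=\ma{R}\ve{v}$; since $\ma{R}$ is orthogonal, $\|\ve{w}\|=\|\ve{v}\|$, and we just saw $\|\ma{P}\ve{w}\|=\|\ve{v}\|=\|\ve{w}\|$. The same orthogonal decomposition argument applied to $\ma{P}$ and $\ve{w}$ yields $\ma{P}\ve{w}=\ve{w}$, i.e., $\ma{R}\ve{v}$ lies in the range of $\ma{P}$. Therefore
\begin{align*}
\ma{R}\ve{v}=\ma{P}\ma{R}\ve{v}=-\ve{v},
\end{align*}
which contradicts $-1\notin\sigma(\ma{R})$, completing the proof.

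The argument is essentially routine; the only subtlety worth flagging is that $\ve{v}$ may be complex, so one must work with the Hermitian inner product throughout, noting that $\ma{P}^*=\ma{P}^\top=\ma{P}$ so the real symmetry of $\ma{P}$ transfers cleanly and the Pythagorean identity still holds.
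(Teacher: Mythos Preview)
Your proof is correct and follows essentially the same route as the paper's: a contradiction argument showing that an eigenvalue $-1$ for $\ma{P}\ma{R}\ma{P}$ forces $\ma{P}\ve{v}=\ve{v}$ and then $\ma{R}\ve{v}=-\ve{v}$. The paper reaches these conclusions via the inner product identity $(\ma{P}\ve{v})^*\ma{R}(\ma{P}\ve{v})=-1$ together with Cauchy--Schwarz rather than your norm chain and Pythagorean decomposition, but the underlying idea is identical.
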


\begin{proof}
Suppose  $(-1,\ve{v})$ is an eigenpair of $\ma{P}\ma{R}\ma{P}$ with $\|\ve{v}\|_2=1$. Then $(\ma{P}\ve{v})^*\ma{R} \ma{P}\ve{v}\allowbreak=-1$. Since $\|\ma{R}\|_2=1$, this implies $\ma{P}\ve{v}=\ve{v}$ whereby $\ve{v}^*\ma{R}\ve{v}=-1$. The last identity requires that $(-1,\ve{v})$ is an eigenpair of $\ma{R}$.\end{proof}



\begin{lemma}\label{eqB:relations}
Consider the block-matrix partitions 
\begin{align*}
\ma{R}=\begin{bmatrix}
\ma[11]{R} & \ve[12]{R}\\
\ve[21]{R} & \ma[22]{R}
\end{bmatrix}\in\SO,\quad
\ma{R}=\begin{bmatrix}
r_{11} & \ve[12]{r}\\
\ve[21]{r} & \ma[22]{R}
\end{bmatrix}\in\SOT,
\end{align*}
where $\ma[22]{R}\in\R^{2\times2}$. The relations $\ve[21]{r}\ve[12]{r}=r_{11}\ma[22]{R}+\ma{S}\ma[22]{R}\ma{S}$  and $(\trace\ma[22]{R})^2+(\trace\ma[22]{R}\ma{S})^2=(1+r_{11})^2$, where 
\begin{align*}
\ma{S}\in\left\{\begin{bmatrix}
0 & -1\\
1 & \phantom{-}0
\end{bmatrix},\begin{bmatrix}
\phantom{-}0 & 1\\
-1 & 0
\end{bmatrix}\right\},
\end{align*}
holds on $\SOT$. The relation $\ma[22]{R}(\mat[22]{R}-\ma[22]{R})=\trace(\ma[22]{R}\ma{S})\ma[22]{R}\ma{S}$ holds on $\SO$.
\end{lemma}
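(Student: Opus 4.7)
The three identities are coordinate-level consequences of (i) the $2\times 2$ linear algebra around the rotation generator $\ma{S}=\begin{bmatrix}0&-1\\1&0\end{bmatrix}$ (the other sign choice being handled symmetrically), (ii) orthonormality of the rows and columns of $\ma{R}$, and (iii) the cofactor identity $\mat{R}=\Adj(\ma{R})$, which holds on $\SOT$ thanks to $\det\ma{R}=1$. A useful preliminary I would record first is the formula $\ma{S}\ma{A}\ma{S}=\mat{A}-\trace(\ma{A})\ma{I}$ valid for every $\ma{A}\in\R^{2\times 2}$; this follows either by a one-line entry check or by combining the $2\times 2$ Cayley--Hamilton equation with the observation that $-\ma{S}\mat{A}\ma{S}$ is the classical adjugate of $\ma{A}$.

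For the first relation, I would substitute the preliminary identity to rewrite the target as $\ve[21]{r}\ve[12]{r}=r_{11}\ma[22]{R}+\mat[22]{R}-\trace(\ma[22]{R})\ma{I}$ and verify it entry by entry. Each scalar equality is exactly one of the four cofactor identities supplied by $\mat{R}=\Adj(\ma{R})$: for instance, the $(1,1)$ entry reduces to $r_{12}r_{21}=r_{11}r_{22}-r_{33}$, which is simply the $(3,3)$-cofactor relation $r_{33}=r_{11}r_{22}-r_{12}r_{21}$ rearranged, and the remaining three entries are handled by the same mechanism with the roles of rows 2 and 3 swapped as needed.

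For the second relation, I would compute $\trace\ma[22]{R}=r_{22}+r_{33}$ and $\trace\ma[22]{R}\ma{S}=r_{23}-r_{32}$ and expand the left-hand side into $(r_{22}^2+r_{23}^2+r_{32}^2+r_{33}^2)+2(r_{22}r_{33}-r_{23}r_{32})$. The four squares collapse to $1+r_{11}^2$ by adding the unit-norm identities for rows 2 and 3 of $\ma{R}\mat{R}=\ma{I}$ and subtracting the unit-norm identity for column 1 of $\mat{R}\ma{R}=\ma{I}$; the bracketed term equals $2\det\ma[22]{R}=2r_{11}$, again by the $(1,1)$-cofactor. The two pieces assemble to $(1+r_{11})^2$.

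The third relation is in fact a purely algebraic identity for every $\ma{A}\in\R^{2\times 2}$, so the hypothesis $\ma{R}\in\SO$ plays no role beyond guaranteeing that the block $\ma[22]{R}$ is $2\times 2$: writing $\ma{A}=\begin{bmatrix}a&b\\c&d\end{bmatrix}$ yields both $\mat{A}-\ma{A}=(b-c)\ma{S}$ and $\trace(\ma{A}\ma{S})=b-c$, so $\mat{A}-\ma{A}=\trace(\ma{A}\ma{S})\ma{S}$, and left-multiplying by $\ma{A}$ gives the claim. The only mild obstacle in the whole proof is the sign bookkeeping in the cofactor table used for relation (1); once that table is written out explicitly, everything else is mechanical.
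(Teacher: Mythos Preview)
Your proof is correct and follows essentially the same approach as the paper's: entry-by-entry verification using the cofactor identities that encode $\det\ma{R}=1$ (the paper phrases these as cross-product relations between columns, which is the same thing). Your version is more complete and better organized---the paper only sketches the first identity---and your observation that the third relation is a pure $2\times 2$ algebraic fact independent of orthogonality is a nice sharpening.
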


\begin{proof}
The proof is by elementary calculations on the level of matrix elements. We only provide a partial proof. The first relation is given by
\begin{align*}
\begin{bmatrix}
r_{21}r_{12} & r_{21}r_{13}\\
r_{31}r_{13} & r_{31}r_{13}
\end{bmatrix}=\begin{bmatrix}
r_{11}r_{22} & r_{11}r_{23}\\
r_{11}r_{32} & r_{11}r_{33}
\end{bmatrix}+\begin{bmatrix}
-r_{33} & \phantom{-}r_{32}\\
\phantom{-}r_{23} & -r_{22}
\end{bmatrix},
\end{align*}
where $\ma{R}=[r_{ij}]$. The north-west of these four identities states that $r_{33}=r_{11}r_{22}-r_{21}r_{12}$ which follows from setting the cross product of the first two columns in $\ma{R}$ equal to the third. The other identities can be proven to hold by reasoning analogously.
\end{proof}





%

\end{document}